\numberwithin{figure}{section}
\numberwithin{table}{section}
\numberwithin{equation}{section}
\newenvironment{abstr}[1]{ \vspace{.05in}\footnotesize
	\parindent .2in
	{\upshape\bfseries #1. }\ignorespaces}{\par\vspace{.1in}}
\newenvironment{Abstract}{\begin{abstr}{Abstract}}{\end{abstr}}
\newenvironment{keywords}{\begin{abstr}{Key words}}{\end{abstr}}
\newenvironment{AMS}{\begin{abstr}{AMS subject classifications}}{\end{abstr}}
\newtheorem{theorem}{Theorem}[section]
\newtheorem{proposition}[theorem]{Proposition}
\theoremstyle{definition}
\newtheorem{remark}[theorem]{Remark}
\DeclareMathOperator{\supp}{supp}
\DeclareMathOperator{\diam}{diam}
\DeclareMathOperator{\dist}{dist}
\DeclareMathOperator{\Div}{div}
\renewcommand{\Re}{\operatorname{Re}}
\newcommand{\nz}{\mathbb{N}}       
\newcommand{\gz}{\mathbb{Z}}       
\newcommand{\rz}{\mathbb{R}}       
\newcommand{\cz}{\mathbb{C}}       
\newcommand\CB{\mathcal{B}}
\newcommand\CQ{\mathcal{Q}}
\newcommand\CR{\mathcal{R}}
\newcommand\CS{\mathcal{S}}
\newcommand\CT{\mathcal{T}}
\newcommand{\UN}{\textup{N}}
\begin{document}
	
	\title{Computational high frequency scattering from high contrast heterogeneous media%
	\thanks{The authors would like to acknowledge the kind hospitality of the Erwin Schr\"odinger International Institute for Mathematics and Physics (ESI), where part of this research was developed under the frame of the Thematic Programme {\itshape Numerical Analysis of Complex PDE Models in the Sciences}.}
	}
	\author{Daniel Peterseim\footnotemark[2] \and Barbara Verf\"urth\footnotemark[2]}
	\date{}
	\maketitle
	
	\renewcommand{\thefootnote}{\fnsymbol{footnote}}
	\footnotetext[2]{Institut für Mathematik, Universität Augsburg, Universitätsstr. 14, D-86159 Augsburg, \texttt{\{daniel.peterseim, barbara.verfuerth\}@math.uni-augsburg.de}}
	\renewcommand{\thefootnote}{\arabic{footnote}}
	
	\begin{Abstract}
	This article considers the computational (acoustic) wave propagation in strongly heterogeneous structures beyond the assumption of periodicity.
	A high contrast between the constituents of microstructured multiphase materials can lead to unusual wave scattering and absorption, which are interesting and relevant from a physical viewpoint, for instance, in the case of crystals with defects. We present a computational multiscale method in the spirit of the Localized Orthogonal Decomposition and provide its rigorous a priori error analysis for two-phase diffusion coefficients that vary between $1$ and very small values. Special attention is paid to the extreme regimes of high frequency, high contrast, and their previously unexplored coexistence.
	A series of numerical experiments confirms the theoretical results and demonstrates the ability of the multiscale approach to efficiently capture relevant physical phenomena.
	\end{Abstract}
	
	\begin{keywords}
	multiscale method; wave propagation; Helmholtz equation; high contrast; photonic crystal	
	\end{keywords}
	
	\begin{AMS}
		35J05, 65N12, 65N15, 65N30
	\end{AMS}

	\section{Introduction}
	\label{sec:introduction}
	
	Wave propagation in heterogeneous high-contrast materials has received a growing interest in recent years because the combination of microstructures and high contrast can lead to unusual material properties such as opening band gaps, artificial magnetism, or surface plasmons, see, for instance, \cite{EP04negphC,JJWM08phC,PSS06cloaking,PE03lefthanded}.
	In periodic structures, homogenization techniques based on two-scale convergence \cite{All92twosc,LNW02twosc} or Bloch-wave techniques \cite{ABV16blochhom,AC98blochhom,CV97blochwave} are able to reduce the problem's complexity and to provide effective models for the overall macroscopic behavior of the wave.
	In practical applications, however, the periodic structure may easily be destroyed by local defects due to production errors.
	Perturbation theory for infinite periodic structures indicates that the combination of certain local defects and wave numbers may (suddenly) lead to localized waves, whereas in the perfectly periodic case the corresponding wave cannot propagate, see \cite{AS04waveguide,JJWM08phC}.
While this can be highly favorable, local defects may also destroy the desired tuned (unusual) properties of the periodic structures.
	However, the above described analytical examination of the phenomena is quite limited to the periodic case with (mild) perturbations, where each case has to be considered separately.
	Moreover, it is not clear how these asymptotic results can predict the particular behavior for a given (finite) structure.
	
	Therefore, numerical simulations of wave propagation and scattering in heterogeneous high-contrast materials are crucial to predict and also understand more situations and phenomena.
	Yet, a direct discretization of realistic problems using standard (finite element) methods easily exceeds today's computational resources because the mesh has to resolve all the material inhomogeneities.
	Additional challenges and restrictions on the mesh size are encountered in the high-frequency regime by the so-called pollution effect, see \cite{BS00pollutionhelmholtz}.
	Therefore, multiscale methods need to be employed to reliably simulate wave propagation in high-contrast heterogeneous materials.

	By now, the challenges for wave propagation problems on the one side and for high-contrast coefficients on the other side mostly have been studied separately.
	To reduce the pollution effect for the Helmholtz equation with constant or low-contrast coefficients, a number of approaches has been designed and analyzed, such as (hybridizable) discontinuous Galerkin methods \cite{CLX13HDGhelmholtz,GM11HDGhelmholtz}, the $hp$-version of the finite element method	\cite{EM12helmholtz,Melenk2013,MS10,MS11helmholtz}, (plane wave) Trefftz methods \cite{GHP09planewaveDGh,HMP16pwdg,PPR15pwvem}, or the Localized Orthogonal Decomposition \cite{BGP15hethelmholtzLOD,GP15scatteringPG,P17LODhelmholtz}.
	High-contrast coefficients mainly have been studied for elliptic problems using multiscale finite element methods \cite{CGH10msfemhighcontrast,EGH13gmsfem,OZ11gfem} and the Localized Orthogonal Decomposition \cite{HM17lodcontrast, PS16lodcontrast}, just to name a few.
	Finally, we mention that high-contrast Helmholtz problems have been studied based upon homogenization results in \cite{OV16hmmhelmholtz}.
	We shall emphasize that high contrast in the diffusion coefficient can be triggered by the degeneration of the upper or the lower bound of the elliptic operator. In the context of time-harmonic wave propagation, the observed effects are quite different in the two regimes, see, for instance, \cite{JJWM08phC}. Locally, the variation of the diffusion coefficient can be rephrased in terms of a variable wave number. Small diffusion coefficients correspond to large  wave numbers which amplifies the aforementioned pollution effect and leads to much more critical conditions on the minimal numerical resolution.
	That is why we focus in this manuscript on the specific setting where the diffusion coefficient takes a small value $\varepsilon^2$ on one part of the domain and $1$ on the complement.
	
	The paper presents a computational multiscale method in the spirit of the Localized Orthogonal Decomposition (LOD) and provides its rigorous analysis for high-contrast Helmholtz problems beyond periodicity and scale separation.
	We build upon previous ideas for high-frequency wave propagation in \cite{BGP15hethelmholtzLOD,GP15scatteringPG,P17LODhelmholtz} and high-contrast elliptic equations in \cite{BP16lodporous,HM17lodcontrast,PS16lodcontrast}.
	The combination of these techniques and approaches yields a (quasi-)localized method which does not require structures in the coefficients such as periodicity.
	Furthermore, a scale separation between the wave length and the size of individual ``valleys'' (where the coefficient is very small) is not required.
	The multiscale method can be viewed as a generalized finite element method with special multiscale basis functions.
	As standard finite element functions on a coarse mesh alone do not yield a faithful approximation space, problem-dependent multiscale functions are added. The latter are defined as solutions of local fine-scale problems.
	Our numerical analysis shows that the (coarse) mesh, and, hence, the dimension of the space, is coupled linearly to the effective wave number.
	Moreover, the size of the fine-scale problems to solve scales only logarithmically with this effective wave number.
	This generalizes the results of \cite{BGP15hethelmholtzLOD,GP15scatteringPG,P17LODhelmholtz} for the constant or low-contrast coefficient case.
	We carefully address the high contrast in our analysis which is crucial to obtain the mildest dependencies on the contrast (encoded in the quantity $\varepsilon$) as possible.
	We provide a priori error estimates in various norms, which can be motivated from the periodic (high contrast) case and are of the expected order.
	The error analysis contains the stability constant of the exact solution (see \eqref{eq:helmhstab} and \eqref{eq:helmhinfsup}). Unfortunately, only little is known in the general setting about bounds for this constant that are explicit in the wave number and the coefficients. Hence, the multiscale method is proved to be contrast- and wave number-robust only under the assumption that the solution depends moderately on these parameters.
	
	The paper is organized as follows. In Section \ref{sec:setting} we introduce our model problem and illustrate properties of the solution in the periodic case.
	We introduce the multiscale method with all necessary notation in Section \ref{sec:LOD} and analyze its well-posedness and a priori errors in Section \ref{sec:LODnumana}.
	The main proofs are detailed in Section \ref{sec:proofs}.
	Finally, numerical experiments in Section \ref{sec:numexp} confirm our predicted theoretical results and show the applicability of our method for physically relevant cases concerning band gap simulations and localized defects.

	\section{Helmholtz problem with high contrast}
	\label{sec:setting}
	In this section, we are concerned with the Helmholtz problem with high contrast in general. First, we formulate our model setting in Section \ref{subsec:problem} and also introduce the necessary notation.
	In Section \ref{subsec:hom}, we then explain in detail the considered set-up by motivating it from homogenization theory and Bloch-wave analysis. The discussion moreover sheds a light on what approximation results we can expect for our multiscale method and compares it to low-contrast Helmholtz problems.

	\subsection{Model problem}
	\label{subsec:problem}
	Let $\Omega\subset \rz^d$ be a bounded polyhedral Lipschitz domain.
	We consider the following model problem: Find $u:\Omega \to \cz$ such that
	\begin{equation*}\begin{aligned}
	-\Div(A\nabla u)-k^2u &=f&&\text{in }\Omega,\\
	\nabla u\cdot n -iku&=0&&\text{on }\partial \Omega.
	\end{aligned}
	\end{equation*}
	Here, we assume $f\in L^2(\Omega)$ and $k\geq k_0>0$.
	Inhomogeneous (Robin) boundary conditions on $\partial\Omega$ can be treated following the theory of \cite{HM14LODbdry}, which we omit for simplicity. Other types of boundary conditions \cite{GP15scatteringPG,P17LODhelmholtz} and also perfectly matched layers \cite{gallistl:hal-01887267} have been studied for homogeneous media and their incorporation in the present work is possible as well.
	The scalar diffusion coefficient $A$ is piece-wise constant with respect to a quadrilateral background mesh $\CT_\varepsilon$ with mesh size $O(\varepsilon)$ and $0<\varepsilon\ll 1$. On each quadrilateral, $A$ takes either the value $\varepsilon^2$ or $1$.
	We introduce the non-overlapping partition $\Omega=\Omega_1\cup \Omega_\varepsilon$, where $\Omega_1$ and $\Omega_\varepsilon$ are the parts of the domain where $A$ takes the value $1$ or $\varepsilon^2$, respectively.
	We assume that  $\Omega_\varepsilon\subset \subset \Omega$, so that $A$ does not appear in the Robin boundary condition.
	This set-up requires some comments. Although we fix the two values of $A$, the present setting still allows much freedom since the geometry of $\Omega_\varepsilon$ is relatively flexible. In particular, no periodicity is assumed.
	We emphasize that the scaling of $A$ is independent of the space dimension. The special choice of $1$ and $\varepsilon^2$ induces a high contrast between the two materials, which is important to obtain unusual macroscopic effects in (periodic) homogenization theory, see Section \ref{subsec:hom} below for a detailed discussion.
	However, we also underline that the results of the present contribution can directly be transferred to other scalings of $A$, see Remark \ref{rem:scaling}.
	We stick with the above choice of $\varepsilon^2$ in $\Omega_\varepsilon$ for simplicity and since this seems to be the physically (most) relevant set-up.
	As mentioned in the introduction, we stress that the present choice of high contrast deals with very small values of $A$ in parts of the domain (in contrast to, for instance, \cite{CGH10msfemhighcontrast}).
	This scaling in particular implies an almost degeneracy of the equation and is therefore harder than the case where $A$ may vary between $1$ and very large values, see Remark \ref{rem:scaling}.
    Finally, note that the assumption of a piece-wise constant $A$ can be relaxed if it is still possible to construct a suitable interpolation operator as described in Section \ref{subsec:mesh} for that diffusion coefficient.
	
	Throughout the whole article, we use standard notation on Sobolev spaces. All functions are complex-valued if not mentioned otherwise. The complex conjugate of $v$ is denoted by $\overline{v}$.
	We write $a\lesssim b$ in short for $a\leq Cb$ with a constant $C$ independent of $k$, $\varepsilon$, the mesh size $H$, and the oversampling parameter $m$.
	Similarly, $a\approx b$ stands for $a\leq Cb$ and $b\leq C^\prime a$ with constants $C, C^\prime$ independent of $k$, $\varepsilon$, $H$, and $m$.
	We then consider the (weak) problem: Find $u\in H^1(\Omega)$ such that
	\begin{equation}
	\label{eq:modelproblem}
	 \CB(u, v):=\int_{\Omega} A\nabla u\cdot \nabla \overline{v}- k^2u\overline{v}\, dx-ik\int_{\partial \Omega}u\overline{v}\, d\sigma=\int_\Omega f\overline{v}\, dx \qquad\forall v\in H^1(\Omega).
	\end{equation}
	Let $\|\cdot\|$ denote the standard $L^2(\Omega)$-norm, we then define the following weighted (semi) norms:
	\begin{align}
		\label{eq:L2Anorm}
		\|u\|_{0, A}&:=\|A^{1/2} u\|,\\
		\label{eq:H1Aseminorm}
		|u|_{1, A}&:=\|A^{1/2}\nabla u\|,\\
		\label{eq:H1Anorm}
		\|u\|_{1, A, k}&:=\bigl(\|A^{1/2}\nabla u\|^2+k^2\| u\|^2\bigr)^{1/2}.
	\end{align}
	If necessary, we will indicate by an additional subscript the domain where the norm is taken.
	Note that $\|\cdot\|_{1, A, k}$ represents the energy norm associated with the sesquilinear form $\CB$.
	If we assume that $\dist(\Omega_\varepsilon, \partial \Omega)$ is bounded from below by an $\varepsilon$-independent constant, we can deduce the following trace inequality (cf.\ \cite{BrennerScott})
	\[k^{1/2}\|v\|_{\partial \Omega}\lesssim\|v\|_{1, A, k}.\]
	Otherwise, if $\Omega_\varepsilon$ is arbitrarily closed to $\partial \Omega$, we would obtain an additional $\varepsilon$-weight in the trace inequality of the following form
	\[\|v\|_{\partial \Omega}\lesssim \varepsilon^{-1}\|v\|_{1, A, k}.\]
	Since the trace inequality is needed for the continuity of $\CB$ with respect to the energy norm, we assume that $\dist(\Omega_\varepsilon, \partial \Omega)$ is bounded from below by an $\varepsilon$-independent constant. Then, the continuity constant of $\CB$ is $\varepsilon$-independent, which simplifies the analysis of the multiscale method below, cf. also Remark \ref{rem:epsilon}.
	
	The unique continuation principle and Fredholm theory provide existence and uniqueness of a (weak) solution to \eqref{eq:modelproblem} in the two-dimensional case or under certain geometrical assumptions on $\Omega_\varepsilon$ in general dimensions, see \cite{GS18helmhexstabil,GPS18helmhstabil} and the discussions therein.
	Note that if $\Omega_\varepsilon$ consists of a finite collection of bounded subdomains as we consider it in the numerical experiments, the unique continuation principle holds in general dimensions, see \cite{GPS18helmhstabil}.
	The solution depends continuously on the right-hand side in the sense of the stability estimate 
	\begin{equation}\label{eq:helmhstab}
		\|u\|_{1, A, k}\lesssim C_{\operatorname{stab}}(k, \varepsilon)\|f\|_{L^2(\Omega)}.
		\end{equation}
		However, the dependencies of the constant $C_{\operatorname{stab}}(k, \varepsilon)$ on $\varepsilon$ and $k$ are not known explicitly in general, even for homogeneous media; see \cite{Mel95gFEM} for a positive and \cite{BCGLL11trapping} for a negative result. 
		Recently, various works have studied stability for the heterogeneous Helmholtz equation, see \cite{BGP15hethelmholtzLOD, GPS18helmhstabil, MS17helmhstabil, ST17helmh1d}. We emphasize that the (common) assumptions on the coefficients are not even close to the target setting of this paper, but the recent work \cite{LSW19helmholtztrapping} indicates that polynomial in $k$ stability results hold for almost all $k$. 
		The stability estimates readily implies an inf-sup condition of the form
		\begin{equation}
		\label{eq:helmhinfsup}
		\inf_{v\in H^1(\Omega)}\sup_{\psi\in H^1(\Omega)}\frac{\Re\CB(v, \psi)}{\|v\|_{1, A, k}\|\psi\|_{1, A, k}}\geq \gamma(\varepsilon, k)
		\end{equation}
		with $\gamma(\varepsilon, k)\approx (1+kC_{\operatorname{stab}}(\varepsilon, k))^{-1}$, see, e.g., \cite{Mel95gFEM}.
		
	\subsection{Photonic crystals and homogenization results}
	\label{subsec:hom}
	To motivate some of our results and connect and differentiate them to the existing literature, we consider the case of a bounded scatterer with periodic micro-structure.
	To be more precise, we consider $\Omega_\varepsilon=\bigcup_{j\in \mathcal{I}\subset \gz^2}\varepsilon(j+\Sigma)$, where $\Sigma$ is an open Lipschitz subset of the unit square and $\mathcal{I}$ is an index set such that $\Omega_\varepsilon$ lies completely in a compact subset of $\Omega$.
	The last condition is necessary to have a uniformly bounded distance between $\Omega_\varepsilon$ and $\partial \Omega$.
	
	\paragraph*{Two-scale homogenization}
	In the perfectly periodic case and under the assumption that $\varepsilon$ is much smaller than the wave length $\lambda\sim k^{-1}$, homogenization using two-scale convergence \cite{All92twosc, LNW02twosc} gives information about the limit problem for fixed but arbitrary $k$ and $\varepsilon\to 0$.
	The surprising result is that one does not only obtain an effective diffusion matrix but also an effective wave number of the form $k^2\mu$, where $\mu$ is defined by an additional problem on $\Sigma$.
	The sign of $\mu$ depends on the choice of $k$, which highly influences the behavior of the macroscopic solution: For positive $\mu$, standard wave propagation is experienced whereas for negative $\mu$ evanescent waves form that decay exponentially inside the scatterer with the periodic microstructure.
	The above considerations hold for compactly embedded inclusions $\Sigma$, see \cite{BF04homhelmholtz}, whereas a layered structure results in a degenerate diffusion matrix and induces so-called surface plasmons, see \cite{BS13plasmonwaves}.
	
	Moreover, due to the high contrast inclusions there is no weak $L^2$-convergence of the exact solution $u$ (for positive $\varepsilon$) to the macroscopic limit solution (named $u_0$), but instead an additional finescale-type contribution in the inclusion has to be added to $u_0$, see \cite{All92twosc}.
	In contrast to problems without high contrast as in \cite{CS14hmmhelmholtz}, we therefore cannot hope to find a good macroscopic $L^2$-approximation of $u$ -- this is only possible in $\Omega_1$.
	However, with an additional correction, we can expect to recover the exact solution in a good $L^2$-sense, cf. \cite{OV16hmmhelmholtz} for corresponding numerical results.
	This explains parts of the a priori error estimates of our multiscale method in Section \ref{subsec:erroranalysis}. Roughly speaking, we find a macroscopic approximation which is close to the exact solution in the $A$-weighted $L^2$ norm $\|\cdot \|_{0, A}$ (which, in particular, means a good approximation on $\Omega_1$.)
	Adding a suitable correction, we obtain an approximation which is good in the $\|\cdot\|_{1, A, k}$ sense. This in particular implies a good approximation in the standard $L^2$ norm.

	\paragraph*{Bloch wave homogenization and photonic crystals}
	If the wave length $\lambda\sim k^{-1}$ is of the same order as the periodicity length $\varepsilon$ and hence, also the diameter of one high-contrast inclusion $\varepsilon\Sigma$, other types of resonances occur which can no longer by predicted by two-scale homogenization theory, see \cite{DS17wavenum}.
	Instead, Bloch wave homogenization methods are employed which are based on the Floquet-Bloch theorem, see \cite{AC98blochhom,CV97blochwave}.
	To get an idea of the (optical) properties of the finite material with periodic microstructure (also called a photonic crystal), one can consider its infinite counterpart on the whole space $\rz^d$.
	If the wave number $k$ is an eigenvalue of the elliptic operator  $-\nabla\cdot (A\nabla\cdot)$ on the whole space, the corresponding waves can propagate inside the (infinite) crystal.
	Due to the periodicity, the spectrum can be computed as the union of all spectra on one periodicity cell with varying quasi-periodic boundary conditions, the so-called Bloch spectra, see \cite{CV97blochwave}.
	Band gaps correspond to ``forbidden'' wave numbers inducing evanescent waves as described above. 
	A high contrast between the material properties allows to open a considerably large gap in the spectrum.
	However, when the periodic structure is destroyed by (local) defects, perturbation theory indicates that eigenvalues inside the band gaps may appear. The corresponding eigenfunctions are often localized near the defect, see \cite{JJWM08phC} for a general discussion and \cite{AS04waveguide} for the specific example of a line defect.
	As mentioned, this considers the infinite crystal with the hope that a considerably large finite photonic crystal will have similar properties.
	The multiscale method presented in this contribution does not rely on two-scale homogenization results and therefore, can also cope with the regime of Bloch wave homogenization.
	Hence, we can directly study finite photonic crystals with possible local defects and their properties.

	\section{Multiscale method}
	\label{sec:LOD}
	In this section, we introduce the multiscale method based on the ideas of the Localized Orthogonal Decomposition (LOD) \cite{MP14LOD,HP13oversampl}.
	First in Section \ref{subsec:mesh}, we need some more notation on meshes and in particular a suitable interpolation operator which pays attention to the high contrast.
	After these preliminaries, we can define our method in detail in Section \ref{subsec:LODdef}. We close with some remarks concerning implementation in Section \ref{subsec:implremark}.

	\subsection{Meshes, finite element spaces, and interpolation operators}
	\label{subsec:mesh}
	We cover $\Omega$ with a regular mesh $\CT_H$ consisting either of simplices or of parallelograms/parallel\-epipeds.
	The mesh is assumed to be shape regular in the sense that the aspect ratio of the elements of $\CT_H$ is bounded uniformly from below.
	We introduce the mesh size $H=\max_{T\in \CT_H}\diam T$. 
	Although we will assume $H=O(\varepsilon)$ for most of this article, see \eqref{eq:resolcondH}, note that $\CT_H$ does not necessarily resolve the interfaces between $\Omega_1$ and $\Omega_\varepsilon$.
	Associated with an element $T\in\CT_H$ we define its neighborhood as \[\UN(T)=\bigcup_{K\in \CT_H, T\cap K\neq \emptyset} K.\]
	Thereby, for any $m\in\nz_0$, the $m$-layer patches are defined inductively via $\UN^{m+1}(T)=\UN(\UN^m(T))$ with $\UN^0(T):=T$.
	The shape regularity implies that there is a bound $C_{\operatorname{ol}, m}$ (depending only on $m$) of the number of the elements in the $m$-layer patch, i.e.,
	\begin{equation}
	\label{eq:Colm}
	\max_{T\in \CT_H}\operatorname{card}\{K\in \CT_H: K\subset \UN^m(T)\}\leq C_{\operatorname{ol}, m}.
	\end{equation}
	Throughout this article, we assume that $\CT_H$ is quasi-uniform, which implies that $C_{\operatorname{ol}, m}$ grows at most polynomially with $m$.
	We discretize the space $H^1_0(\Omega)$ with the lowest order Lagrange elements over $\CT_H$, and denote this space by $V_H$.
	This means that $V_H=H^1(\Omega)\cap \CS^1(\CT_H)$, where $\CS^1(\CT_H)$ denotes the space of globally continuous functions which are polynomials of partial degree $\leq 1$ (for quadrilateral elements) or polynomials of total degree $\leq 1$ (for simplicial elements).
	In the case of quadrilateral meshes, one can easily exploit a possible (periodic) structure or pattern in the coefficient $A$, see Section \ref{subsec:implremark}.
	
	Let $I_H: H^1(\Omega)\to V_H$ denote a bounded local linear projection operator, i.e., $I_H\circ I_H = I_H$, with the following stability and approximation properties for all $v\in H^1(\Omega)$ and all $w\in \ker I_H$
	\begin{align}
	\label{eq:IHstab}
	|I_H v|_{1, A, T}&\lesssim |v|_{1, A, \UN(T)},\\
	\label{eq:IHapprox}
	\|w\|_{0, A, T}&\lesssim H|w|_{1, A, \UN(T)},\\
	\label{eq:IHstabenergy}
	\|I_Hv\|_{1, A, k, T}&\lesssim \|v\|_{1, A, k, \UN(T)},
	\end{align}
	where we recall that the constant hidden in $\lesssim$ is independent of $H$ and $\varepsilon$.
	We emphasize that we assume stability and approximation properties of $I_H$ in $A$-weighted norms, which is the crucial difference between the low-contrast and the high-contrast case.
	The stability in the energy norm \eqref{eq:IHstabenergy} can be deduced from \eqref{eq:IHstab} and \eqref{eq:IHapprox} at least in the following two cases: The mesh satisfies the resolution condition \eqref{eq:resolcondH} (as assumed later on), which restricts the mesh size significantly and may be too pessimistic.
	The other possibility is that for each $T\in \CT_H$, the quotient $\frac{|T|}{|T\cap\Omega_1|}$ is uniformly bounded from above, which mainly implies that $H$ may not be too fine.
	Anyhow, the crucial assumptions are \eqref{eq:IHstab} and \eqref{eq:IHapprox}.
	They have been verified under certain geometric assumptions using special $A$-weighted interpolants in \cite{HM17lodcontrast,PS16lodcontrast}.
	A possible choice of \cite{PS16lodcontrast} that we also use in our numerical implementation builds upon $A$-weighted local $L^2$ projections in the following sense.
	Let $\mathcal{N}_{\operatorname{int}}$ denote the interior nodes of $\CT_H$ and $\lambda_z$ the hat function associated with the node $z$.
	We then define
    \begin{equation}\label{eq:IHweighted}
    I_H(v) =\sum_{z\in \mathcal{N}_{\operatorname{int}}}P_{A, \omega_z}(v)(z) \lambda_z
    \end{equation}
    with the local $A$-weighted $L^2$-projection $P_{A, \omega_z}$ on the patch $\omega_z:=\{K\in \CT_H: z\in K\}$.
    More precisely, for any $v\in H^1(\omega_z)$, $P_{A, \omega_z}(v)\in V_H|_{\omega_z}$ is given by
    \[\int_{\omega_z}A P_{A, \omega_z}(v)\, \psi_H\, dx =\int_{\omega_z}Av\, \psi_H\, dx\qquad \text{for all}\quad \psi_H\in V_H|_{\omega_z}.\]
    Assumptions \eqref{eq:IHstab} and \eqref{eq:IHapprox} hold for this interpolant if the coefficient $A$ is quasi-monotone.
    For instance, in the two-dimensional case, this condition is satisfied if -- roughly speaking -- $\Omega_\varepsilon$ consists of small inclusions of diameter $\varepsilon$ that do not touch each other.
    This setup is considered in our numerical experiments. For more details on quasi-monotonicity we refer to \cite{PS16lodcontrast} and the originial works in the context of domain decomposition \cite{Dryja1996,10.1007/978-3-642-11304-8_21}.
	In \cite{HM17lodcontrast}, a Scott-Zhang-type construction is employed, where the geometrical assumptions on $\Omega_\varepsilon$ are not so easy to describe.
	We emphasize, however, that the periodic structure underlying our numerical experiments in Section \ref{sec:numexp} is mentioned as an example in \cite{HM17lodcontrast} (with circle inclusion instead of square inclusions) and is also analyzed in \cite{BP16lodporous}.
	For this choice, our numerical experiments indicate that for a high contrast, the above described $A$-weighted interpolation operator $I_H$ (cf. \cite{PS16lodcontrast}) is favorable over its unweighted variant
	\begin{equation}\label{eq:IHunweighted}
	I_H^1(v):=\sum_{z\in \mathcal{N}_{\operatorname{int}}}P_{1, \omega_z}(v)(z) \lambda_z,
	\end{equation}
	which utilizes local projections w.r.t. the standard $L^2$-norm. Note that this unweighted construction is used for low-contrast problems, e.g. in \cite{Pet15LODreview}.
	Moreover, we stress that especially the construction in \cite{PS16lodcontrast} can be extended to coefficients which take more than two distinct values (see the discussion in \cite{PS16lodcontrast}).

	\subsection{Definition of the method}
	\label{subsec:LODdef}
	We formulate a (generalized) Petrov-Galerkin method to discretize \eqref{eq:modelproblem}.
	The method uses special multiscale trial and test functions, see \cite{P17LODhelmholtz}. The idea is to interpret the kernel of the interpolation operator $I_H$ as the space of functions with finescale heterogeneities and to incorporate parts of these information by special localized finescale problems.
	
	Let $W(\UN^m(T)):=\{w\in \ker I_H\;,\,w=0 \text{ in } \Omega\setminus \UN^m(T)\}$. Note that $\ker I_H$ is infinite-dimensional and hence $W(\UN^m(T))$ is non-trivial. We introduce the element-wise localized corrector approximation $\CQ_{T, m}:V_H\to W(\UN^m(T))$ via
	\begin{equation}
	\label{eq:correclocalproblem}
	\CB_{\UN^m(T)}(\CQ_{T, m} v_H, w)=-\CB_T(v_H, w)\qquad \forall w\in W(\UN^m(T)).
	\end{equation}
	Here, $\CB_\omega$ denotes the restriction of $\CB$ to a subset $\omega\subset \Omega$.
	Note that \eqref{eq:correclocalproblem} requires only the local computation of finescale problems if $m$ is small. The parameter $m$ is commonly called oversampling parameter and will be coupled to the mesh size $H$ later on.
	Using the element-wise correctors $\CQ_{T, m}$ we define the localized corrector approximation via
	\begin{equation}
	\label{eq:correclocal}
	\CQ_m=\sum_{T\in \CT_H}\CQ_{T, m}.
	\end{equation}
	The dual localized corrector is $\CQ^*_mv_H:=\overline{\CQ_m(\overline{v_H})}$.
	
	The localized method now uses the pair $((\operatorname{id}+\CQ_m)V_H, (\operatorname{id}+\CQ^*_m)V_H)$ as the new trial and test spaces in the Petrov-Galerkin formulation: Find $u_{H, m}\in V_H$ as solution to
	\begin{equation}
	\label{eq:LOD}
	\CB((\operatorname{id}+\CQ_m)u_{H, m}, (\operatorname{id}+\CQ^*_m)v_H)=(f, (\operatorname{id}+\CQ^*_m)v_H) \qquad \forall v_H\in V_H.
	\end{equation}
	Note that $u_{H,m}$ lies in the standard finite element space and, hence, is called a macroscopic approximation since it cannot contain finescale information of the exact solution $u$.
	This information is only recovered in
	\begin{equation}\label{eq:uLODm}
	u_{\operatorname{LOD}, m}:=(\operatorname{id}+\CQ_m)u_{H, m}.
	\end{equation}
	The well-posedness of the scheme and the error between the exact and the numerical solution will be analyzed in the next sections.
	
	\subsection{Practical aspects}
	\label{subsec:implremark}
	
	The method presented in the previous section is not yet ready to use since the corrector problems \eqref{eq:correclocalproblem} are still infinite-dimensional.
	Moreover, the number of these problems to solve increases with smaller mesh size $H$ so that one has to think about an efficient computation.
	Both issues are briefly addressed in this section.
	For general aspects of the implementation, we refer to \cite{EHMP16LODimpl}.

	\paragraph*{Fully discrete version of the multiscale method} To discretize the corrector problems \eqref{eq:correclocalproblem}, we introduce a finescale shape-regular quadrilateral or simplicial mesh $\CT_h$ of $\Omega$, which resolves all features and discontinuities of $A$.
	Moreover, we assume that the mesh is fine enough that a direct finite element simulation on $\CT_h$ would yield a faithful reference solution $u_h$ to $u$.
	We stress, however, that this reference solution is not needed and not computed in our method.
	Only the local corrector problems are solved on the finescale mesh using the space $W_h(\UN^m(T)):=\{w\in H^1(\Omega)\cap \CS^1(\CT_h)\;,\,w=0\text{ in }\Omega\setminus\UN^m(T)\text{ and }I_Hw=0\}$.
	The assumption that $\CT_h$ is sufficiently fine (as described above) is required for two reasons: (i) It guarantees that the multiscale method \eqref{eq:LOD} still possesses a unique solution because for a sufficiently fine mesh, $\CB$ fulfills an inf-sup condition over $H^1(\Omega)\cap \CS^1(\CT_h)$ (which is a central argument in the proof of Theorem~\ref{thm:LODwellposed} below). (ii) It implies that the error committed by this additional discretization stays negligible.
	Due to the latter fact, we only analyze (for simplicity) the semi-discrete method as formulated in Section \ref{subsec:LODdef} and refer to, e.g., \cite{GP15scatteringPG} for details on the minor technical changes in the proofs for the fully discrete method.
	
	\paragraph*{Efficient computation of the correctors}
	First, we note that all corrector problems are independent and hence, can be easily computed in parallel.
	What is even more important is that additional structure in the coarse mesh  $\CT_H$ and the coefficient can be exploited to reduce the number of corrector problems.
	For instance, in the periodic setting the corrector problems are translation invariant if we use a quadratic mesh with a mesh size which is an integer multiple of the periodicity length. Apart from a few problems at the boundary, only one interior corrector problem needs to be solved, cf. \cite{GP15scatteringPG} for the homogeneous Helmholtz equation.
	Similar arguments allow to reduce the number of corrector problems for a (periodic) photonic crystal with a few local defects.
	If one has already computed the correctors for the periodic photonic crystals and then introduces local defects (as perturbations of the periodic structure), one can employ local error indicators as in \cite{HM17lodsimilar} to automatically detect locations where the correctors need to be recomputed.
	Numerical experiments in \cite{HKM19lodperturb} have shown that thereby a large part of the old correctors can be reused for local defects, which also lowers the computational complexity.
	Here, however, we do not consider this approach in more detail since we assume the geometry of $\Omega_\varepsilon$ to be a priori given and not as a deviation from a previous, more structured, set-up.
	
	\paragraph*{Efficient computation of the LOD solution}
	The efficient computation of the LOD solution as well as CPU timings  have been addressed in full detail in \cite{EHMP16LODimpl} for diffusion and eigenvalue problems. The algorithms turn over to the Helmholtz case in a very natural way.
	Without any assumptions on the structure of the coefficient, we need to solve $O(H^{-d})$ cell problems of type \eqref{eq:correclocal}. If these are discretized on a mesh of size $h$ (see above), the number of degrees of freedom for each problem is $O(m^d(H/h)^d)$. Given their coercivity and since $H$ resolves both the wave number and the oscillations of $A$, they can be solved efficiently.
	As pointed out in the previous paragraph, the number of cell problems can be drastically reduced for structured coefficients.
	The linear system of the LOD associated with \eqref{eq:LOD} is of small dimension, namely of $\operatorname{dim}(V_H)$. Note that numerical experiments show that $m=2,3$ is sufficient and therefore the sparsity pattern of the matrix is only slightly enlarged by a factor $O(m^d)$ in comparison to standard FEM. The conditioning is comparable to standard FEM on the coarse mesh $\CT_H$.
	The most efficient iterative solution of such systems is an open question in numerical analysis and beyond the scope of this article.
	The present approach does neither simplify nor overcomplicate this issue of numerical linear algebra. However, the tailored discretization allows the reduction of the degrees of freedom and hence the size of the system to almost optimal in the sense of sampling theory, whereas standard FEM would require much higher costs even in the regime of constant coefficients.

\section{Error analysis}
\label{sec:LODnumana}
In this section, we analyze the multiscale method of Section \ref{subsec:LODdef} for the high-contrast Helmholtz problem.
We first show in Section \ref{subsec:correctors} that under a reasonable resolution condition $kH\varepsilon^{-1}\lesssim 1$,  the corrector problems are coercive and therefore well-posed. 
Then, we give the main results of this paper in Section \ref{subsec:erroranalysis}, namely the inf-sup condition of the method as well as several a priori error estimates.
These results are derived under the above resolution condition and the oversampling condition $m\gtrsim |\log(kH\varepsilon^{-1})|$ if the original model problem satisfies a polynomial $k$- and $\varepsilon$-dependence of the stability constant $C_{\operatorname{stab}}(k, \varepsilon)$.
The resolution condition $kH\varepsilon^{-1}\lesssim 1$ should be compared to the usual resolution condition encountered for the quasi-optimality in standard finite element methods, which presumably is $k^2h\varepsilon^{-2}\lesssim 1$ at best in non-smooth scenarios. Note that the condition for existence of a FEM solution can be relaxed to $h(k\varepsilon^{-1})^\alpha\lesssim 1$, but one will always have $\alpha>1$ for standard FEM.

\subsection{The corrector problems}
\label{subsec:correctors}
We first consider an idealized variant of \eqref{eq:LOD} with ``$m=\infty$'', i.e., with $ \UN^m(T)=\Omega$ in the corrector problems \eqref{eq:correclocalproblem}.
For notational convenience, we denote by $W$ the kernel of the interpolation operator $I_H$. 
Define the ideal correction operator $\CQ:H^1(\Omega)\to W$ via
\begin{equation}
\label{eq:correc}
\CB(\CQ v, w)=-\CB(v, w)\qquad \forall w\in W.
\end{equation}
Note that $\CQ=\sum_{T\in \CT_H}\CQ_T$, where $\CQ_T$ solves \eqref{eq:correc} with right-hand side $\CB_T$.
Since the sesquilinear form $\CB$ is indefinite, we first have to show the well-posedness of these corrector problems.

\begin{proposition}\label{prop:correccoercive}
If the mesh size $H$, the wave number $k$, and diffusion parameter $\varepsilon$ satisfy the condition
\begin{equation}
\label{eq:resolcondH}
kH\varepsilon^{-1}\lesssim 1,
\end{equation}
we have the following equivalence of norms on $W$
\[|w|_{1, A}\leq \|w\|_{1, A, k}\lesssim |w|_{1, A}.\]
Moreover, the sesquilinear form $\CB$ is elliptic over $W$, i.e., there exists $\alpha>0$ (independent of $H$, $\varepsilon$, and $k$) such that 
\[\Re\CB(w, w)\geq \alpha |w|_{1, A}^2\gtrsim \alpha\|w\|_{1, A, k}^2 \quad \forall w\in W.\]
\end{proposition}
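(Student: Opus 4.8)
The plan is to handle the two assertions in order: first the norm equivalence on $W$, from which ellipticity will follow almost immediately once we inspect the real part of $\CB(w, w)$. The only nontrivial direction of the equivalence is $\|w\|_{1, A, k}\lesssim|w|_{1, A}$, since $|w|_{1, A}\leq\|w\|_{1, A, k}$ is immediate from the definition \eqref{eq:H1Anorm}. Accordingly, I would reduce everything to the single estimate $k^2\|w\|^2\lesssim(kH\varepsilon^{-1})^2\,|w|_{1, A}^2$ for all $w\in W=\ker I_H$.

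To obtain this estimate, the key is to exploit the \emph{weighted} approximation property \eqref{eq:IHapprox}. Summing it over all $T\in\CT_H$ and using the finite overlap \eqref{eq:Colm} yields the global bound $\|w\|_{0, A}^2\lesssim H^2|w|_{1, A}^2$. The subtle point is converting the weighted norm $\|w\|_{0, A}=\|A^{1/2}w\|$ back into the unweighted $L^2$ norm $\|w\|$ appearing in \eqref{eq:H1Anorm}. Since $A\geq\varepsilon^2$ pointwise --- the worst case occurring precisely on $\Omega_\varepsilon$ --- one has $\|w\|^2\leq\varepsilon^{-2}\|w\|_{0, A}^2\lesssim\varepsilon^{-2}H^2|w|_{1, A}^2$. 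Multiplying by $k^2$ gives exactly $k^2\|w\|^2\lesssim(kH\varepsilon^{-1})^2|w|_{1, A}^2$, so under the resolution condition \eqref{eq:resolcondH} the right-hand side is controlled by $|w|_{1, A}^2$; this proves $\|w\|_{1, A, k}\lesssim|w|_{1, A}$ and hence the asserted norm equivalence.

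For the ellipticity I would simply compute the real part of $\CB(w, w)$. As the boundary term $-ik\int_{\partial\Omega}|w|^2\,d\sigma$ is purely imaginary, we get $\Re\CB(w, w)=|w|_{1, A}^2-k^2\|w\|^2$. Inserting the estimate $k^2\|w\|^2\leq C_1(kH\varepsilon^{-1})^2|w|_{1, A}^2$ from the previous step and requiring the hidden constant in \eqref{eq:resolcondH} to be small enough that $C_1(kH\varepsilon^{-1})^2\leq\tfrac12$, I obtain $\Re\CB(w, w)\geq\tfrac12|w|_{1, A}^2$, i.e.\ ellipticity with $\alpha=\tfrac12$. The remaining inequality $\alpha|w|_{1, A}^2\gtrsim\alpha\|w\|_{1, A, k}^2$ is then nothing but the norm equivalence already established.

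The one place where genuine care is needed is the $\varepsilon^{-2}$ blow-up incurred when passing from the weighted to the unweighted $L^2$ norm. This factor is exactly compensated by the two powers of $\varepsilon$ built into the resolution condition $kH\varepsilon^{-1}\lesssim 1$, which is why \eqref{eq:resolcondH} (rather than merely $kH\lesssim 1$) is the correct scaling and the crucial hypothesis of the proposition. Everything else is bookkeeping: the weighted interpolation estimates \eqref{eq:IHstab}--\eqref{eq:IHapprox} are assumed, and \eqref{eq:Colm} supplies the element-wise summation with constants independent of $H$, $\varepsilon$, and $k$.
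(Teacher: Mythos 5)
Your proposal is correct and follows essentially the same route as the paper's proof: the pointwise bound $A\geq\varepsilon^2$ converts the unweighted $L^2$ norm into the weighted one, the approximation property \eqref{eq:IHapprox} on $\ker I_H$ supplies the factor $H$, and the resolution condition \eqref{eq:resolcondH} then yields both the norm equivalence and the absorption of the $k^2\|w\|^2$ term (the boundary term being purely imaginary). Your write-up merely makes explicit some steps the paper compresses, such as the element-wise summation and the choice of the absorption constant.
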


\begin{proof}
Similar to \cite{P17LODhelmholtz}, the crucial observation is that \eqref{eq:IHapprox} implies
\begin{align}
k\|w\|\lesssim k\varepsilon^{-1}\|w\|_{0, A}&\lesssim k H\varepsilon^{-1}\|A^{1/2}\nabla w\|
\end{align}
for all $w\in W$.
Thus, the energy norm $\|\cdot\|_{1, A, k}$ is equivalent to the weighted $H^1$ semi norm $|\cdot |_{1, A}$ if \eqref{eq:resolcondH} is satisfied.
Moreover, the $L^2$ part of the sesquilinear form $\CB$ can be absorbed in the gradient part if \eqref{eq:resolcondH} is satisfied, leading to the ellipticity.
\end{proof}

The ellipticity of $\CB$ over $W$ implies the unique solvability of \eqref{eq:correc} due to the theory Lax-Milgram-Babu\v{s}ka.
The same argument can be applied to obtain the well-posedness of the localized corrector problems \eqref{eq:correclocal}.

\begin{remark}
The resolution condition \eqref{eq:resolcondH} is a natural consequence and generalization of the resolution condition $kH\lesssim 1$ for the homogeneous or low-contrast Helmholtz equation, cf. \cite{GP15scatteringPG, P17LODhelmholtz}. 
In fact, on $\Omega_\varepsilon$, $k\varepsilon^{-1}$ is the effective wave number which needs to be resolved.
There are certainly geometries of $\Omega_\varepsilon$ where this condition is necessary and sharp, but it is in general not clear how ``likely'' such situations are.
As discussed, at least conditions like $\varepsilon^{-2}k^2h\lesssim 1$ and $h\ll \varepsilon$ are expected for general non-smooth coefficients for standard finite element methods where possible advantages of high-order methods \cite{MS10, MS11helmholtz} cannot easily be exploited. (An exact statement or estimate of this condition is not available for this setting as the dependence of the stability constant on $\varepsilon$ and $k$ is not known.)
\end{remark}

The error between the idealized corrector and its localized (or truncated) approximation decays exponentially in the number of layers $m$ (also called oversampling parameter) in the following way.

\begin{theorem}
	\label{thm:correcerror}
	Let $\CQ$ be defined by \eqref{eq:correc} and $\CQ_m$ be defined by \eqref{eq:correclocal}.
	Let \eqref{eq:resolcondH} be satisfied.
	Then there exists a constant $0<\beta<1$, independent of $H$, $m$, $k$, and $\varepsilon$, such that for all $v_H\in V_H$ it holds that
	\begin{equation}
	\label{eq:correcerror}
	|(\CQ-\CQ_m)v_H|_{1, A}\lesssim 
	 C_{\operatorname{ol}, m}^{1/2}\beta^m|v_H|_{1, A}.
	\end{equation}
\end{theorem}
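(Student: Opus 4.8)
The plan is to establish the exponential decay \eqref{eq:correcerror} via the now-standard LOD localization argument, adapted to the $A$-weighted energy norm. The key structural facts I would exploit are already in hand: by Proposition~\ref{prop:correccoercive}, under the resolution condition \eqref{eq:resolcondH} the sesquilinear form $\CB$ is elliptic over $W$ with respect to $|\cdot|_{1,A}$, and this weighted seminorm is equivalent to the full energy norm $\|\cdot\|_{1,A,k}$ on $W$. Thus on the corrector space the indefinite Helmholtz form behaves like a coercive elliptic form, and I can transport the classical proof of \cite{MP14LOD,HP13oversampl} with the $A$-weights carried along.

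\textbf{Decay of the ideal correctors.} First I would show that the element correctors $\CQ_T v_H$ themselves decay exponentially away from $T$. The main tool is a Caccioppoli-type estimate combined with a cutoff argument. Fix $T$ and let $\eta$ be a Lipschitz cutoff function that vanishes on $\UN^{m-1}(T)$ and equals $1$ outside $\UN^m(T)$, with $|\nabla\eta|\lesssim H^{-1}$ supported on a single annular layer. Testing the corrector equation \eqref{eq:correc} against a function built from $\eta\,\CQ_T v_H$ (corrected back into $W$ by subtracting its $I_H$-image, which is the delicate bookkeeping step) and using ellipticity of $\CB$ over $W$, I would derive a recursion of the form
\[
|\CQ_T v_H|_{1,A,\Omega\setminus\UN^m(T)}^2 \lesssim \tilde\gamma\,|\CQ_T v_H|_{1,A,\UN^m(T)\setminus\UN^{m-1}(T)}^2
\]
with a fixed contraction factor $\tilde\gamma<1$ independent of $H,k,\varepsilon$. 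Iterating over the layers yields $|\CQ_T v_H|_{1,A,\Omega\setminus\UN^m(T)}\lesssim \beta^m |\CQ_T v_H|_{1,A}$ for some $0<\beta<1$.

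\textbf{From element decay to the truncation error.} Next I would bound the difference between the ideal and localized element correctors. Since $(\CQ_T-\CQ_{T,m})v_H$ lies in $W$, I can use Galerkin orthogonality of the localized problem together with ellipticity (Céa-type estimate in the $|\cdot|_{1,A}$ norm) to show that $|(\CQ_T-\CQ_{T,m})v_H|_{1,A}$ is controlled by the energy of $\CQ_T v_H$ outside the patch $\UN^m(T)$, hence by $\beta^m|\CQ_T v_H|_{1,A}$. Summing over all $T\in\CT_H$ for the total corrector $\CQ-\CQ_m=\sum_T(\CQ_T-\CQ_{T,m})$, I would invoke Cauchy--Schwarz together with the finite-overlap bound \eqref{eq:Colm}: the number of patches $\UN^m(T)$ covering any point is bounded by $C_{\operatorname{ol},m}$, which produces exactly the factor $C_{\operatorname{ol},m}^{1/2}$ in \eqref{eq:correcerror}. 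A local stability estimate $|\CQ_T v_H|_{1,A}\lesssim |v_H|_{1,A,T}$ for the element correctors (again from ellipticity applied to \eqref{eq:correc} with right-hand side $\CB_T$) collapses the sum of local seminorms into the global $|v_H|_{1,A}$.

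\textbf{Main obstacle.} I expect the principal difficulty to be the cutoff-correction step: multiplying a corrector $w\in W$ by $\eta$ destroys the property $I_H w=0$, so $\eta w\notin W$ and cannot be used directly as a test function. I would repair this by subtracting $I_H(\eta w)$, and then control this correction term using the approximation and stability properties \eqref{eq:IHstab}--\eqref{eq:IHapprox} of $I_H$ \emph{in the $A$-weighted norms}. This is precisely where the high-contrast setting demands care: the commutator between $I_H$ and multiplication by $\eta$ must be estimated with $\varepsilon$-robust constants, which is only possible because the interpolation bounds \eqref{eq:IHstab} and \eqref{eq:IHapprox} are stated in the weighted seminorm $|\cdot|_{1,A}$ rather than the standard $H^1$ seminorm. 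Ensuring that the resulting contraction factor $\beta$ and all hidden constants are genuinely independent of $\varepsilon$ (and $k$, via the norm equivalence from Proposition~\ref{prop:correccoercive}) is the crux; once the weighted interpolation estimates are invoked correctly, the remaining iteration and summation are routine adaptations of the homogeneous-coefficient argument in \cite{P17LODhelmholtz}.
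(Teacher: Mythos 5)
Your proposal follows essentially the same route as the paper's proof: exponential decay of the ideal element correctors via an $A$-weighted cutoff-and-iteration (Caccioppoli-type) argument in which $\eta\phi$ is corrected back into $W$ by subtracting $I_H(\eta\phi)$, then a C\'ea-type estimate for the element-wise truncation error, and finally a Cauchy--Schwarz/finite-overlap summation producing the factor $C_{\operatorname{ol},m}^{1/2}$, all made $\varepsilon$- and $k$-robust exactly as you say by the weighted interpolation bounds \eqref{eq:IHstab}--\eqref{eq:IHapprox} and the resolution condition \eqref{eq:resolcondH}. The only step you gloss is the final summation: since the terms $(\CQ_T-\CQ_{T,m})v_H$ are globally supported, the bare finite-overlap count does not apply directly, and one must pair each such term against the total error in $\CB$ and split the latter with the same cutoff-plus-orthogonality device you already use for the decay estimate (this is how the paper localizes the interaction to $O(m)$-patches before invoking \eqref{eq:Colm}).
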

The proof is given in Section \ref{subsec:proofcorrec}.
Employing that both correctors map into the kernel space and using \eqref{eq:IHapprox}, we deduce furthermore that
\[\|(\CQ-\CQ_m)v_H\|_{0, A}\lesssim C_{\operatorname{ol}, m}^{1/2}H\beta^m\|v_H\|_{1, A}.	\]
We emphasize that neither the constant hidden in $\lesssim$ nor $\beta$ depend on $H$, $m$, $k$, or $\varepsilon$. In particular the independence of $\beta$ from $\varepsilon$ is the so-called contrast-independent localization. It can only be obtained with the $A$-weighted interpolation operator and is the major difference of this work from the previous ones on the low-contrast Helmholtz equation \cite{BGP15hethelmholtzLOD,GP15scatteringPG,P17LODhelmholtz}.

\subsection{Stability of the method and a priori error estimates}
\label{subsec:erroranalysis}
The previous section showed that the corrector problems have a unique (and stable) solution.
Before analyzing the error of the multiscale method \eqref{eq:LOD}, we need to show the stability (and well-posedness) of the method by proving an inf-sup condition.
Since the arguments utilize the corrector error estimate \eqref{eq:correcerror}, we first examine what happens if we replace $\CQ_m$ and $\CQ^*_m$ in \eqref{eq:LOD} by their idealized counterparts $\CQ$ from \eqref{eq:correc} and its adjoint $\CQ^*$.
Hence, $\CQ^*: H^1(\Omega)\to W$ is defined via
\[\CB(w, \CQ^*v)=-\CB(w, v)\qquad \forall w\in W\]
and we note that $\CQ^*v:=\overline{\CQ(\overline{v})}$.
By the definition of $\CQ^*$ and $W$, it holds that $W$ and $(\operatorname{id}+\CQ^*)V_H$ are orthogonal with respect to $\CB$.
More precisely, we have $\CB(w, (\operatorname{id}+\CQ^*)v_H)=0$ for all $w\in W$ and all $v_H\in V_H$. Note that the same holds true for $(\operatorname{id}+\CQ)V_H$ and $W$.
Next we observe that the interpolation $I_H u$ of the exact solution satisfies
\begin{align*}
\CB(I_H u, (\operatorname{id}+\CQ^*)v_H)&=\CB(u, (\operatorname{id}+\CQ^*)v_H)+\CB(\underbrace{I_H u-u}_{\in W}, (\operatorname{id}+\CQ^*)v_H)\\
&= (f, (\operatorname{id}+\CQ^*)v_H) .
\end{align*}
Using once more the $\CB$-orthogonality of $W$ and $(\operatorname{id}+\CQ^*)V_H$, this implies that $u_{\operatorname{LOD}}:=(\operatorname{id}+\CQ) I_H u$ solves
\[\CB(u_{\operatorname{LOD}}, (\operatorname{id}+\CQ^*)v_H)= (f, (\operatorname{id}+\CQ^*)v_H) \qquad \forall v_H\in V_H.\]

The well-posedness and the a priori estimates now follow from the properties of the above idealized method and the fact that the ideal and the localized correctors are exponentially close.

\begin{theorem}
\label{thm:LODwellposed}
Under the resolution condition \eqref{eq:resolcondH} and the oversampling condition \begin{equation}
\label{eq:oversamplcond}
m\gtrsim |\log(\gamma^{-1}(\varepsilon, k)C_{\operatorname{ol}, m}^{1/2}
)|/|\log(\beta)|
\end{equation}
$\CB$ satisfies the following inf-sup condition: there exists $\gamma_{\operatorname{LOD}}\approx\gamma(\varepsilon, k)>0$ such that
\begin{equation}
\label{eq:LODinfsup}
\inf_{v_H\in V_H}\sup_{\psi_H\in V_H}\frac{\Re\CB((\operatorname{id}+\CQ_m)v_H, (\operatorname{id}+\CQ^*_m)\psi_H)}{\|v_H\|_{1, A, k}\| \psi_H\|_{1, A, k}}\geq \gamma_{\operatorname{LOD}}.
\end{equation}
\end{theorem}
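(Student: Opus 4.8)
The plan is to prove \eqref{eq:LODinfsup} in two stages: first establish an inf-sup condition for the \emph{idealized} method (with $\CQ$, $\CQ^*$ in place of $\CQ_m$, $\CQ^*_m$) with constant $\approx\gamma(\varepsilon,k)$, by transferring the continuous inf-sup \eqref{eq:helmhinfsup} through the $\CB$-orthogonal decomposition; then transfer this to the localized method via the exponential corrector decay of Theorem~\ref{thm:correcerror}, absorbing the perturbation with the oversampling condition \eqref{eq:oversamplcond}.

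\textbf{Idealized inf-sup.} Fix $v_H\in V_H$ and set $\tilde v:=(\operatorname{id}+\CQ)v_H$. Since $I_H\CQ v_H=0$ we have $I_H\tilde v=v_H$, and combining the energy stability \eqref{eq:IHstabenergy} of $I_H$ with the corrector stability $\|\CQ v_H\|_{1, A, k}\lesssim\|v_H\|_{1, A, k}$ (which follows from the ellipticity over $W$ in Proposition~\ref{prop:correccoercive} together with the $\varepsilon$-robust continuity of $\CB$ and the norm equivalence on $W$) I obtain $\|\tilde v\|_{1, A, k}\approx\|v_H\|_{1, A, k}$. Applying \eqref{eq:helmhinfsup} to $\tilde v$ yields $\psi\in H^1(\Omega)$ with $\|\psi\|_{1, A, k}=\|\tilde v\|_{1, A, k}$ and $\Re\CB(\tilde v,\psi)\ge\gamma(\varepsilon,k)\|\tilde v\|_{1, A, k}^2$. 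I then split $\psi=(\operatorname{id}+\CQ^*)\psi_H+\tilde w$ with $\psi_H:=I_H\psi\in V_H$ and $\tilde w\in W$, which is valid because $\CQ^*$ maps into $W$ and $I_H$ is a projection. Since $(\operatorname{id}+\CQ)V_H$ is $\CB$-orthogonal to $W$, the component $\tilde w$ drops out, so
\[\Re\CB((\operatorname{id}+\CQ)v_H,(\operatorname{id}+\CQ^*)\psi_H)=\Re\CB(\tilde v,\psi)\ge\gamma(\varepsilon,k)\|\tilde v\|_{1, A, k}^2\gtrsim\gamma(\varepsilon,k)\|v_H\|_{1, A, k}\|\psi_H\|_{1, A, k},\]
where the last step uses $\|\psi_H\|_{1, A, k}\lesssim\|\psi\|_{1, A, k}=\|\tilde v\|_{1, A, k}\approx\|v_H\|_{1, A, k}$ by \eqref{eq:IHstabenergy}.

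\textbf{Localization.} Keeping the same $\psi_H$, I write $\CQ_m=\CQ-(\CQ-\CQ_m)$ and $\CQ^*_m=\CQ^*-(\CQ^*-\CQ^*_m)$ and expand $\CB((\operatorname{id}+\CQ_m)v_H,(\operatorname{id}+\CQ^*_m)\psi_H)$ into the idealized term plus three remainders, each carrying a factor $(\CQ-\CQ_m)v_H$ or $(\CQ^*-\CQ^*_m)\psi_H$. These factors lie in $W$, so the norm equivalence of Proposition~\ref{prop:correccoercive} lets me pass to $|\cdot|_{1, A}$ and invoke Theorem~\ref{thm:correcerror}; together with the continuity of $\CB$ and the corrector stability bounds, each remainder is controlled by $C_{\operatorname{ol}, m}^{1/2}\beta^m\|v_H\|_{1, A, k}\|\psi_H\|_{1, A, k}$. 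Hence, with $c$ the idealized constant from the previous step,
\[\Re\CB((\operatorname{id}+\CQ_m)v_H,(\operatorname{id}+\CQ^*_m)\psi_H)\ge\bigl(c\,\gamma(\varepsilon,k)-C\,C_{\operatorname{ol}, m}^{1/2}\beta^m\bigr)\|v_H\|_{1, A, k}\|\psi_H\|_{1, A, k}.\]
The oversampling condition \eqref{eq:oversamplcond} is precisely what forces $C\,C_{\operatorname{ol}, m}^{1/2}\beta^m\le\tfrac12 c\,\gamma(\varepsilon,k)$, leaving $\gamma_{\operatorname{LOD}}\approx\gamma(\varepsilon,k)>0$; taking the supremum over $\psi_H$ and the infimum over $v_H$ gives \eqref{eq:LODinfsup}.

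\textbf{Main obstacle.} I expect the delicate point to be the idealized stage, specifically the two-sided equivalence $\|(\operatorname{id}+\CQ)v_H\|_{1, A, k}\approx\|v_H\|_{1, A, k}$ and the bound $\|\psi_H\|_{1, A, k}\lesssim\|\psi\|_{1, A, k}$: this is where the $A$-weighted stability and approximation of $I_H$ and the resolution condition \eqref{eq:resolcondH} must be used to keep every constant independent of $\varepsilon$, since any hidden $\varepsilon$-dependence here would leak into $\gamma_{\operatorname{LOD}}$ and destroy the claimed relation $\gamma_{\operatorname{LOD}}\approx\gamma(\varepsilon,k)$. The localization stage is then essentially bookkeeping, relying on the $\varepsilon$-robust continuity of $\CB$ (guaranteed by the distance assumption on $\Omega_\varepsilon$) and on Theorem~\ref{thm:correcerror}.
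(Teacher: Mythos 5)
Your proposal is correct and follows essentially the same route as the paper's proof: apply the continuous inf-sup condition \eqref{eq:helmhinfsup} to $(\operatorname{id}+\CQ)v_H$, choose $\psi_H=I_H\psi$ as the discrete test function, exploit the $\CB$-orthogonality between $W$ and the ideal trial/test spaces together with the stability of $I_H$ and of the correctors, and absorb the localization error via Theorem~\ref{thm:correcerror} under the oversampling condition \eqref{eq:oversamplcond}. The only cosmetic difference is bookkeeping: your expansion produces three remainder terms (two of which actually vanish by orthogonality), whereas the paper's splitting leaves the single term $\CB((\operatorname{id}+\CQ_m)v_H,(\CQ^*_m-\CQ^*)\psi_H)$.
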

 The proof is similar to \cite{P17LODhelmholtz} and needs the contrast-independent exponential decay of the corrector error, for details see Section \ref{subsec:proofestimates}.
 The oversampling condition depends on the stability constant for the original problem so that the dependence of $m$ on $\varepsilon$ and $k$ is not analytically known.
 If we assume $\gamma(\varepsilon, k)\approx k^{-q}\varepsilon^p$ for non-negative numbers $p$ and $q$ independent of $k$ and $\varepsilon$, the oversampling condition roughly reads $m\gtrsim |\log(k\varepsilon^{-1})|$ (with a multiplicative factor $pq$ hidden in the $\lesssim$ notation).
This logarithmic dependence on the effective wave number $k\varepsilon^{-1}$ in $\Omega_\varepsilon$ is in agreement with the results for the homogeneous or low-contrast Helmholtz equation in \cite{BGP15hethelmholtzLOD, GP15scatteringPG, P17LODhelmholtz}.
 Numerical experiments moreover indicate that rather small numbers $m=2,3$ are sufficient.

	\begin{theorem}
	\label{thm:LODapriori}
	Let $u$ be the solution to \eqref{eq:modelproblem} and let $u_{H, m}$ be the solution to \eqref{eq:LOD}  and set $u_{\operatorname{LOD}, m}:=(\operatorname{id}+\CQ_m)u_{H, m}$.
	Assume that \eqref{eq:resolcondH} and the oversampling condition \eqref{eq:oversamplcond} are satisfied.
	Then it holds that
	\begin{align}
	\label{eq:errorfineenergy}
	\|u-u_{\operatorname{LOD}, m}\|_{1, A, k}&\lesssim H\|A^{-1/2} f\|+C_{\operatorname{ol}, m}^{1/2}\beta^m\gamma^{-1}(\varepsilon, k)\|f\|,\\
	\label{eq:errorfineL2w}
	\|u-u_{\operatorname{LOD}, m}\|_{0, A}&\lesssim (H+C_{\operatorname{ol}, m}^{1/2}\beta^m\varepsilon\gamma^{-1}(\varepsilon, k)) \|u-u_{\operatorname{LOD}, m}\|_{1, A, k}\\
	\label{eq:errorcoarseL2w}
	\|u-u_{H, m}\|_{0, A}&\lesssim H\inf_{v_H\in V_H}|u-v_H|_{1, A}\\\nonumber
	&\quad+C_{\operatorname{ol}, m}^{1/2}\beta^m\gamma_{\operatorname{LOD}}^{-1}\bigl(H\|A^{-1/2}f\|+C_{\operatorname{ol}, m}^{1/2}\beta^m\gamma^{-1}(\varepsilon, k)\|f\|\bigr).
	\end{align}
	\end{theorem}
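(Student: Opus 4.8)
The plan is to reduce each of the three estimates to the ingredients already in place: ellipticity of $\CB$ on $W$ (Proposition~\ref{prop:correccoercive}), the contrast-independent corrector decay (Theorem~\ref{thm:correcerror}), and the inf-sup stability of the localized scheme (Theorem~\ref{thm:LODwellposed}). Throughout I write $e:=u-u_{\operatorname{LOD},m}$ and use the ideal solution $u_{\operatorname{LOD}}=(\operatorname{id}+\CQ)I_Hu$, for which the excerpt already records $\CB(u_{\operatorname{LOD}},(\operatorname{id}+\CQ^*)v_H)=(f,(\operatorname{id}+\CQ^*)v_H)$ together with the two-sided $\CB$-orthogonality between $W$ and $(\operatorname{id}+\CQ)V_H$, $(\operatorname{id}+\CQ^*)V_H$.

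For the energy estimate \eqref{eq:errorfineenergy} I split $e=(u-u_{\operatorname{LOD}})+(u_{\operatorname{LOD}}-u_{\operatorname{LOD},m})$. Since $I_H(u-u_{\operatorname{LOD}})=0$, the first part lies in $W$ and satisfies $\CB(u-u_{\operatorname{LOD}},w)=(f,w)$ for all $w\in W$; testing with $w=u-u_{\operatorname{LOD}}$, using ellipticity on $W$ and \eqref{eq:IHapprox} to bound $\|u-u_{\operatorname{LOD}}\|_{0,A}\lesssim H|u-u_{\operatorname{LOD}}|_{1,A}$, yields $\|u-u_{\operatorname{LOD}}\|_{1,A,k}\lesssim H\|A^{-1/2}f\|$. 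For the localization part I set $z_H:=u_{H,m}-I_Hu\in V_H$ and feed $(\operatorname{id}+\CQ_m)z_H$ into the inf-sup condition \eqref{eq:LODinfsup}. Expanding the numerator and subtracting the ideal identity, the crucial point is that after writing $\CQ_m=\CQ+(\CQ_m-\CQ)$ and $\CQ^*_m=\CQ^*+(\CQ^*_m-\CQ^*)$, the two-sided $\CB$-orthogonality annihilates both first-order corrector-difference contributions, leaving only $(f,(\CQ^*_m-\CQ^*)\psi_H)$ and the quadratically small term $\CB((\CQ_m-\CQ)I_Hu,(\CQ^*_m-\CQ^*)\psi_H)$. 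Estimating these with Theorem~\ref{thm:correcerror} and its $\|\cdot\|_{0,A}$-version gives $\|z_H\|_{1,A,k}\lesssim\gamma_{\operatorname{LOD}}^{-1}C_{\operatorname{ol},m}^{1/2}\beta^m(H\|A^{-1/2}f\|+C_{\operatorname{ol},m}^{1/2}\beta^m\gamma^{-1}\|f\|)$; combined with $\|(\CQ-\CQ_m)I_Hu\|_{1,A,k}\lesssim C_{\operatorname{ol},m}^{1/2}\beta^m\|u\|_{1,A,k}$ and $\|u\|_{1,A,k}\lesssim C_{\operatorname{stab}}\|f\|\lesssim\gamma^{-1}\|f\|$ this produces the second term of \eqref{eq:errorfineenergy}.

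For \eqref{eq:errorfineL2w} I would use an Aubin--Nitsche duality in the $A$-weighted inner product: let $z\in H^1(\Omega)$ solve the adjoint problem $\CB(w,z)=\int_\Omega A\,w\,\overline{e}\,dx$, so that $\|e\|_{0,A}^2=\CB(e,z)$. The Galerkin orthogonality $\CB(e,(\operatorname{id}+\CQ^*_m)v_H)=0$, valid because both $u$ and $u_{\operatorname{LOD},m}$ satisfy the scheme against the test space, lets me replace $z$ by $z-(\operatorname{id}+\CQ^*_m)I_Hz\in W$, whence $\|e\|_{0,A}^2\lesssim\|e\|_{1,A,k}\,\|z-(\operatorname{id}+\CQ^*_m)I_Hz\|_{1,A,k}$. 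The ideal part $z-(\operatorname{id}+\CQ^*)I_Hz\in W$ contributes the factor $H$, while the localization part $(\CQ^*-\CQ^*_m)I_Hz$ is controlled by Theorem~\ref{thm:correcerror} together with the dual stability $|z|_{1,A}\lesssim C_{\operatorname{stab}}\|A\,e\|$. \emph{The main obstacle lies precisely here}: the naive bound $\|A\,e\|\le\|e\|_{0,A}$ only delivers the factor $C_{\operatorname{ol},m}^{1/2}\beta^m\gamma^{-1}$, whereas the sharp statement demands the extra power of $\varepsilon$. To recover it I would exploit the high-contrast structure of the dual datum $Ae$, which equals $\varepsilon^2e$ on $\Omega_\varepsilon$, transferring the derivative off the corrector via the defining relation of $z$ so that the localization contribution pairs against the $A$-weighted corrector bound $\|(\CQ^*-\CQ^*_m)I_Hz\|_{0,A}\lesssim C_{\operatorname{ol},m}^{1/2}H\beta^m|z|_{1,A}$; the resolution condition \eqref{eq:resolcondH}, in the form $H\lesssim\varepsilon k^{-1}$, then converts the surplus factor $H$ into the claimed $\varepsilon$. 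Making this transfer rigorous, in particular handling the $\Omega_1$-contribution without losing the $\varepsilon$, is the delicate step.

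Finally, \eqref{eq:errorcoarseL2w} follows from the splitting $u-u_{H,m}=(u-I_Hu)+(I_Hu-u_{H,m})$. Since $u-I_Hu\in W$ and, for every $v_H\in V_H$, $u-I_Hu=(u-v_H)-I_H(u-v_H)$, the properties \eqref{eq:IHapprox} and \eqref{eq:IHstab} give $\|u-I_Hu\|_{0,A}\lesssim H|u-I_Hu|_{1,A}\lesssim H\inf_{v_H\in V_H}|u-v_H|_{1,A}$, which is the first term. The remaining contribution is $\|I_Hu-u_{H,m}\|_{0,A}=\|z_H\|_{0,A}\le\|z_H\|_{1,A,k}$, already bounded in the proof of \eqref{eq:errorfineenergy} by $\gamma_{\operatorname{LOD}}^{-1}C_{\operatorname{ol},m}^{1/2}\beta^m(H\|A^{-1/2}f\|+C_{\operatorname{ol},m}^{1/2}\beta^m\gamma^{-1}\|f\|)$, which is exactly the second term; hence the estimate follows at once.
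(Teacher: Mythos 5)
Your proofs of \eqref{eq:errorfineenergy} and \eqref{eq:errorcoarseL2w} are correct, but organized differently from the paper. The paper splits $e=u-u_{\operatorname{LOD},m}$ as $(e-e_{H,m})+e_{H,m}$ with $e_{H,m}=(\operatorname{id}+\CQ_m)(I_Hu-u_{H,m})$, absorbs $e_{H,m}$ into $e-e_{H,m}$ by a discrete adjoint (duality) argument, and then bounds $e-e_{H,m}=u-(\operatorname{id}+\CQ_m)I_Hu\in W$ by ellipticity, which makes the localization term $\CB((\CQ-\CQ_m)I_Hu,\cdot)$ appear there. You instead pass through the ideal solution $u_{\operatorname{LOD}}=(\operatorname{id}+\CQ)I_Hu$, bound $u-u_{\operatorname{LOD}}\in W$ purely by ellipticity (giving the clean $H\|A^{-1/2}f\|$ term), and control $z_H=u_{H,m}-I_Hu$ through \eqref{eq:LODinfsup}; your cancellation identity is valid, since $(\CQ_m-\CQ)I_Hu\in W$ kills one cross term against $(\operatorname{id}+\CQ^*)V_H$ and $u_{\operatorname{LOD}}\in(\operatorname{id}+\CQ)V_H$ kills the other against $W$, leaving exactly $(f,(\CQ_m^*-\CQ^*)\psi_H)$ plus the quadratic corrector term. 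The resulting bound for $\|z_H\|_{1,A,k}$ matches the paper's, and reusing it verbatim gives \eqref{eq:errorcoarseL2w} somewhat more directly than the paper, which re-runs the inf-sup argument for the idealized method at that point. Both routes use the same ingredients (ellipticity on $W$, discrete inf-sup, corrector decay, the two orthogonalities), so this is a legitimate reorganization rather than a new method.

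The genuine gap is exactly where you flagged it: the factor $\varepsilon$ in \eqref{eq:errorfineL2w}. Your duality setup coincides with the paper's (adjoint problems with datum $Ae$, Galerkin orthogonality, then the energy estimate applied to the adjoint), and it yields $\|e\|_{0,A}^2\lesssim\|e\|_{1,A,k}\bigl(H\|e\|_{0,A}+C_{\operatorname{ol},m}^{1/2}\beta^m\gamma^{-1}(\varepsilon,k)\|Ae\|\bigr)$. To obtain the stated estimate one needs $\|Ae\|\lesssim\varepsilon\|e\|_{0,A}$, which holds on $\Omega_\varepsilon$ (there $Ae=\varepsilon\,A^{1/2}e$) but fails on $\Omega_1$ (there $Ae=A^{1/2}e$); so only the $\varepsilon$-free bound follows, or, if one tracks the dual stability via $|(\psi,Ae)|\le\|\psi\|_{0,A}\|e\|_{0,A}\le k^{-1}\|\psi\|_{1,A,k}\|e\|_{0,A}$, a bound with $k^{-1}$ in place of $\varepsilon$. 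Your sketched repair does not close this: the gradient--gradient part of $\CB(e,\cdot)$ cannot be paired with the $\|\cdot\|_{0,A}$-corrector bound, so there is no surplus $H$ for the resolution condition $H\lesssim\varepsilon k^{-1}$ to act on, and you concede as much. You should know, however, that the paper is no more explicit at precisely this step: its proof ends with the same intermediate inequality $\|z-z_{\operatorname{LOD},m}\|_{1,A,k}\lesssim H\|A^{1/2}e\|+C_{\operatorname{ol},m}^{1/2}\beta^m\gamma^{-1}(\varepsilon,k)\|Ae\|$ followed by the assertion that ``combination of the foregoing estimates finishes the proof,'' which tacitly uses the very inequality that fails on $\Omega_1$. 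So you reproduced the paper's argument for \eqref{eq:errorfineL2w} and, in addition, isolated its one unjustified step; what neither you nor the paper supplies is a rigorous derivation of the $\varepsilon$-weighted form of that estimate.
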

The proof is detailed in Section \ref{subsec:proofestimates}.

	If $m$ does not only fulfill the oversampling condition \eqref{eq:oversamplcond}, but is also coupled to $H$ like $m\approx |\log(\gamma^{-1}(\varepsilon, k)C_{\operatorname{ol}, m}^{1/2}H)|$, Theorem \ref{thm:LODapriori} simplifies to the following convergence orders: (i) The error between the exact and the full multiscale solution converges linearly in the energy norm and quadratically in an $A$-weighted $L^2$ norm, cf. \eqref{eq:errorfineenergy} and \eqref{eq:errorfineL2w}. (ii) The error between the exact solution and the FE part of the multiscale solution converges (at least) linearly in the $A$-weighted $L^2$ norm, cf. \eqref{eq:errorcoarseL2w}.
For the latter, note that  $\inf_{v_H\in V_H}|u-v_H|_{1, A}\lesssim |u|_{1, A}\lesssim C_{\operatorname{stab}}(\varepsilon, k)\|f\|$ independent of the regularity of $u$.
If $u$ has more than $H^1(\Omega)$ regularity, the rate in \eqref{eq:errorcoarseL2w} may improve.
We stress that the remaining terms on the right-hand side of \eqref{eq:errorcoarseL2w} are of order $H^2$ if the coupling $m\approx |\log(\gamma^{-1}(\varepsilon, k)C_{\operatorname{ol}, m}^{1/2}H)|$ is satisfied.

	These estimates generalize the (expected) approximation results formulated in Section \ref{subsec:hom}.
	We can only find a macroscopic approximation of the exact solution in $\Omega_1$, which is reflected in \eqref{eq:errorcoarseL2w} by the $A$-weighting in the $L^2$ norm. For a good approximation in the energy norm, which also includes the standard $L^2$ norm, we need an additional (finescale) corrector, which is also present in \eqref{eq:errorfineenergy}.
	In the main error estimate \eqref{eq:errorfineenergy}, we furthermore emphasize that  the error from the volume term is weighted by $A^{-1/2}$, which induces a factor of $\varepsilon^{-1}$ on the right-hand side of the estimate \emph{if} the support of $f$ intersects with $\Omega_\varepsilon$.
	
	\begin{remark}
	\label{rem:epsilon}
	Our main results of this paper rely on the $\varepsilon$-independent trace inequality mentioned in Section \ref{subsec:problem}, which needs that $\Omega_\varepsilon$ is bounded uniformly away from $\partial \Omega$.
	For settings where this is not the case, one can carefully trace the occurrence of extra $\varepsilon$ powers because of the continuity of $\CB$ and we find that this mainly changes the oversampling condition to $m\gtrsim|\log(\gamma^{-1}(\varepsilon, k)\varepsilon C_{\operatorname{ol}, m}^{1/2})|/|\log(\beta)|$.
	This, however, is not critical since we expect a dependence of $m$ on $\varepsilon$ anyway because of $\gamma(\varepsilon, k)$.
	\end{remark}	

	\begin{remark}
	\label{rem:scaling}
	The proofs of Theorems \ref{thm:LODwellposed} and \ref{thm:LODapriori} reveal that the $\varepsilon$ dependence in the resolution and the oversampling condition come from the $\varepsilon$ dependence of the lower bound on $A$ (and not the minimal diameter $\varepsilon$ of an individual ``valley'' of $\Omega_\varepsilon$).
	In fact, one can directly generalize the results to the case where $A$ jumps between the values $1$ and $\alpha^2\ll 1$ by replacing all occurrences of $\varepsilon$ by $\alpha$.
	In particular, this means that our results remain valid for other scalings of $A$ beyond the (physically interesting) one of, for instance, \cite{BF04homhelmholtz}.
	
	In the ``complementary'' high contrast setting, where $A$ jumps between values $1$ and $\alpha\gg 1$, one also needs $A$-weighted interpolation operators to get a contrast-independent decay of the correctors and thereby contrast-independent error estimates for the LOD.
	Since in this case it trivially holds $\|v\|\lesssim \|v\|_{0,A}$ for any $v\in H^1(\Omega)$, the resolution condition \eqref{eq:resolcondH} reduces to the usual $kH\lesssim 1$ (independent of $\alpha$), cf. for instance the proof of Proposition \ref{prop:correccoercive}.
	For the same reason, one can replace $\|A^{-1/2} f\|$ by $\|f\|$ in \eqref{eq:errorfineenergy} and \eqref{eq:errorcoarseL2w}.
    These improvements of the results reflect and underline the fact that this scaling of $A$ is ``easier'' as it does not lead to high (effective) wave numbers as discussed in the introduction.
	\end{remark}

	\section{Proofs of the main results}
	\label{sec:proofs}
	
	\subsection{Proof of the corrector error}
	\label{subsec:proofcorrec}
	The goal of this section is to prove \eqref{eq:correcerror}.
	The main ingredient is the following exponential decay result.
	We empasize once more that the following decay is contrast-independent, i.e., the rate $\tilde{\beta}$ and also the multiplicative constant hidden in the notation $\lesssim$ both do \emph{not} depend on $\varepsilon$. This is achieved by using an $A$-weighted interpolation operator and distinguishes the present analysis from previous works on the Helmholtz equation \cite{BGP15hethelmholtzLOD,GP15scatteringPG,P17LODhelmholtz}.
	
	\begin{proposition}
		\label{prop:correcdecay}
		Let $\CQ=\sum_{T\in \CT_H}\CQ_T$ be defined by \eqref{eq:correc} and $v_H\in V_H$. 
		There exists a constant $0<\tilde{\beta}<1$, independent of $H$, $m$, and $\varepsilon$ such that
		\begin{equation}
		\label{eq:correcdecay}
		|\CQ_T v_H|_{1, A, \Omega\setminus \UN^m(T)}\lesssim \tilde{\beta}^m|v_H|_{1, A, T}.
		\end{equation} 
	\end{proposition}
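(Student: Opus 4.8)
The plan is to establish the exponential decay by a Caccioppoli-type / iterated-cutoff argument, which is the standard engine behind LOD localization proofs. First I would fix the element $T$ and abbreviate $\phi := \CQ_T v_H \in W$. The strategy is to show that the $A$-weighted energy of $\phi$ outside a patch $\UN^{m}(T)$ is controlled by its energy in a slightly smaller annular region, and to iterate this one-step decay over the $m$ layers. To this end I would introduce, for $\ell < m$, a Lipschitz cutoff function $\eta$ that vanishes on $\UN^{\ell}(T)$, equals $1$ on $\Omega \setminus \UN^{\ell+1}(T)$, and satisfies $\|\nabla \eta\|_{L^\infty} \lesssim H^{-1}$. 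The key point, and the reason the rate $\tilde\beta$ is contrast-\emph{independent}, is that $\eta$ is constant ($0$ or $1$) on each coarse element and its gradient lives only in the ``gap'' layer; when we commute $\eta$ with $A^{1/2}\nabla$ the extra commutator term is $\phi\nabla\eta$, which can be estimated using the approximation property \eqref{eq:IHapprox} of the $A$-weighted interpolant $I_H$ \emph{without} picking up any power of $\varepsilon$.

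Concretely, the one-step estimate would proceed as follows. Because $\phi \in W = \ker I_H$ and $\CB$ is elliptic on $W$ by Proposition~\ref{prop:correccoercive}, I would use a test function of the form $w = I_H(\eta^2 \phi) - \eta^2\phi \in W$ (subtracting its interpolant restores membership in the kernel), insert it into the corrector equation \eqref{eq:correc}, and exploit that $\CB_T(v_H, w) = 0$ since $w$ is supported away from $T$ for $\ell \geq 1$. Expanding $\CB(\phi, \eta^2\phi)$ and integrating by parts, the leading term reproduces $|\phi|_{1,A,\Omega\setminus\UN^{\ell}(T)}^2$, while the commutator and the $k^2$ mass term are absorbed: the mass term is controlled via the norm equivalence $\|\phi\|_{1,A,k}\lesssim|\phi|_{1,A}$ on $W$ (again Proposition~\ref{prop:correccoercive}, valid under \eqref{eq:resolcondH}), and the commutator term $\int A^{1/2}\nabla\eta\cdot(\cdots)\phi$ is handled by \eqref{eq:IHapprox} applied on the annulus, giving $\|\phi\|_{0,A,\text{annulus}}\lesssim H|\phi|_{1,A,\text{annulus}}$ so that the $H^{-1}$ from $\nabla\eta$ cancels. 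This yields a one-layer contraction
\[
|\phi|_{1,A,\Omega\setminus\UN^{\ell+1}(T)}^2 \leq \tilde\beta^2\,|\phi|_{1,A,\UN^{\ell+1}(T)\setminus\UN^{\ell}(T)}^2
\]
with a fixed $0<\tilde\beta<1$ independent of $H$, $\ell$, $k$, and $\varepsilon$.

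Finally I would iterate: writing the energy outside $\UN^{\ell+1}(T)$ in terms of the energy outside $\UN^{\ell}(T)$ minus the annular contribution produces a recursion of the form $a_{\ell+1}\le C(a_\ell-a_{\ell+1})$, which telescopes to give geometric decay $a_m \lesssim \tilde\beta^{m}a_0$, and bounding $a_0 = |\phi|_{1,A,\Omega}$ by $|v_H|_{1,A,T}$ via the stability of the corrector (testing \eqref{eq:correc} with $w=\phi$ and using ellipticity together with continuity of $\CB_T$) closes the estimate \eqref{eq:correcdecay}. The main obstacle I anticipate is the \emph{commutator bookkeeping}: making sure that every appearance of $\nabla\eta$ is paired with a factor of $\phi$ that can be traded for $H|\phi|_{1,A}$ through \eqref{eq:IHapprox}, and confirming that the constant produced there is genuinely $\varepsilon$-free. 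This is precisely where the $A$-weighting of the interpolation operator is essential — an unweighted interpolant would leak a factor $\varepsilon^{-1}$ into the estimate on $\Omega_\varepsilon$ and destroy the contrast-independence of $\tilde\beta$.
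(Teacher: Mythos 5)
Your proposal is correct and follows essentially the same route as the paper's own proof: a coarse cutoff function, the kernel test function $(\operatorname{id}-I_H)(\eta\,\CQ_T v_H)\in W$ whose support avoids $T$ so that the corrector equation \eqref{eq:correc} annihilates the right-hand side, the $A$-weighted properties \eqref{eq:IHstab}--\eqref{eq:IHapprox} combined with the resolution condition \eqref{eq:resolcondH} to absorb the $k^2$-terms and the $\nabla\eta$-commutators contrast-independently, and a hole-filling recursion that telescopes into geometric decay. The remaining differences are cosmetic — you contract layer by layer with $\eta^2$ whereas the paper runs one annulus estimate over blocks of five layers before iterating, and both arguments treat the final stability bound of $|\CQ_T v_H|_{1,A}$ in terms of $v_H$ with the same brevity.
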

	
	\begin{proof}
		We define the cutoff function $\eta\in V_H$ via
		\[\eta=0\quad\text{in}\quad\UN^{m-3}(T),\qquad\eta=1\quad \text{in}\quad \Omega\setminus \UN^{m-2}(T)\]
		and set $\CR=\supp(\nabla \eta)$.
		Let $v_H\in V_H$ and denote $\phi:=\CQ_Tv_H$.
		Elementary estimates lead to
		\begin{align*}
		|\phi|^2_{1, A, \Omega\setminus\UN^m(T)}&\lesssim |\Re(A\nabla \phi, \eta\nabla\phi)_{\Omega}|\leq|\Re(A\nabla \phi, \nabla(\eta\phi))_{\Omega}|+|\Re(A\nabla \phi, (\nabla \eta)\phi)_{\Omega}|\\
		&\leq M_1+M_2+M_3
		\end{align*}
		with
		\begin{align*}
		M_1&:=|\Re(A\nabla \phi, \nabla(\operatorname{id}-I_H)(\eta\phi))_{\Omega}|,\\
		M_2&:=|\Re(A\nabla \phi, \nabla I_H(\eta\phi))_{\Omega}|,\\
		M_3&:=|\Re(A\nabla \phi, (\nabla\eta)\phi)_{\Omega}|.
		\end{align*}
		
		Since $w:=(\operatorname{id}-I_H)(\eta\phi)\in W$, the idealized corrector problem \eqref{eq:correc} and the fact that $w$ has only support outside $T$ imply that $\CB(\phi, w)=\CB_T(v_H, w)=0$.
		Therefore, we obtain 
		\[M_1=|\Re(A\nabla \phi, \nabla w)|\leq \Bigl|\Re\Bigl(\CB(\phi, w)+k^2(\phi, w)\Bigr)\Bigr|=|k^2\Re(\phi, w)|.\]
		Hence, the stability and approximation estimates \eqref{eq:IHstab} and \eqref{eq:IHapprox} for $I_H$, the properties of $\eta$ , and the resolution condition \eqref{eq:resolcondH} give
		\begin{align*}
		M_1&\lesssim  \varepsilon^{-2}k^2 H^2(|\phi|^2_{1, A, \Omega\setminus\UN^{m}(T)}+\|\nabla \eta\|_{\UN(\CR)}\|\phi\|_{0, A, \UN(\CR)}|\phi|_{1, A, \UN^2(\CR)})\\
		&\lesssim \varepsilon^{-2}k^2 H^2(|\phi|^2_{1, A, \Omega\setminus\UN^{m}(T)}+|\phi|^2_{1, A, \UN^2(\CR)})\\
		&\lesssim \frac12 (|\phi|^2_{1, A, \Omega\setminus\UN^m(T)}+|\phi|^2_{1,A,\UN^2(\CR)}),
		\end{align*}
		where the first term can be hidden on the left-hand side.
		
		Because of $\supp(I_H(\eta\phi))\subset \UN(\CR)$, the properties of $I_H$ and the above estimate for $|\eta\phi|_{1, A}$ lead to
		\begin{align*}
		M_2\lesssim |\phi|_{1, A, \UN(\CR)}|\eta\phi|_{1, A, \UN(\CR)}\lesssim |\phi|^2_{1, A, \UN^2(\CR)}.
		\end{align*}
		Finally, the properties of $\eta$ and the approximation result \eqref{eq:IHapprox} for $I_H$ show that
		\[M_3\lesssim |\phi|_{1, A, \CR}|\phi|_{1, A, \UN(\CR)}.\]
		
		All in all, this gives
		\[|\phi|^2_{1, A, \Omega\setminus\UN^m(T)}\leq C|\phi|^2_{1, A, \UN^2(\CR)}\]
		with a constant $C$ independent of $H$, $m$, $k$, and $\varepsilon$. We recall that $\UN^2(\CR)=\UN^m(T)\setminus\UN^{m-5}(T)$.
		Because of 
		\[|\phi|^2_{1, A, \Omega\setminus\UN^m(T)}+|\phi|^2_{1, A, \UN^m(T)\setminus \UN^{m-5}(T)}=|\phi|^2_{1, A, \UN^{m-5}(T)}\]
		we obtain
		\[|\phi|^2_{1, A, \Omega\setminus\UN^m(T)}\leq (1+2C^{-1})^{-1}|\phi|_{1, A, \Omega\setminus \UN^{m-5}(T)}.\]
		Note that $(1+2C^{-1})^{-1}<1$, so that a repeated application of the above argument and algebraic manipulations finish the proof. Note in particular that $\tilde{\beta}$ is independent of $H$, $m$, $k$, and $\varepsilon$ because $C$ is independent of these quantities.
	\end{proof}
	
	The idea for the proof of Theorem \ref{thm:correcerror} is now that the ideal corrector is only truncated in $\Omega\setminus\UN^m(T)$, where we know by the previous Proposition that the contribution is exponentially small.
	
	\begin{proof}[Proof of Theorem \ref{thm:correcerror}]
		We define the cutoff function $\eta\in\CS^1(\CT_H)$ via
		\[\eta=1\quad\text{in}\quad\UN^{m-2}(T),\qquad\eta=0\quad \text{in}\quad \Omega\setminus \UN^{m-1}(T).\]
		With the ellipticity and continuity of $\CB$ over $W$, we deduce C\'ea's Lemma
		\[|(\CQ_{T, m}-\CQ_{T})v_H|_{1, A}\lesssim 
		\inf_{w_{T, m}\in W(\UN^m(T))}|\CQ_{T}v_H-w_{T, m}|_{1, A}.\]
		Inserting $w_{T, m}=(\operatorname{id}-I_H)(\eta\CQ_{T}v_H)$ and using stability and approximation properties of $I_H$ yields -- similar to the proof of Proposition \ref{prop:correcdecay} --
		\begin{equation}\label{eq:elementcorrector}
		|(\CQ_{T, m}-\CQ_{T})v_H|_{1, A}\lesssim
		|\CQ_T v_H|_{1, A, \Omega\setminus \UN^m(T)}.
		\end{equation}
		Applying \eqref{eq:correcdecay} results in an exponential decay result for the error between the element correctors $\CQ_T $ and $\CQ_{T, m}$.
		
		To obtain the global estimate, we set $z:=(\CQ-\CQ_m)v_H$ and $z_T:=(\CQ_{T}-\CQ_{T, m})v_H$ and define the cutoff function $\eta\in\CS^1(\CT_H)$ via
		\[\eta=0\quad\text{in}\quad\UN^{m+1}(T), \qquad\eta=1\quad \text{in}\quad \Omega\setminus \UN^{m+2}(T).\]
		The ellipticity of $\CB$ yields
		\begin{align*}
		|z|^2_{1, A}\lesssim \sum_{T\in \CT_H}|\CB(z_T, z)|\leq \sum_{T\in \CT_H}|\CB(z_T, (1-\eta)z)|+|\CB(z_T, (\operatorname{id}-I_H)(\eta z))|+|\CB(z_T, I_H(\eta z))|.
		\end{align*}
		The second term vanishes because $(\operatorname{id}-I_H)(\eta z)\in W$ with support outside $\UN^m(T)$.
		The function $(1-\eta)z$ vanishes on $S:=\{\eta=1\}$ and we obtain with the scaling properties of $\eta$, \eqref{eq:IHapprox}, \eqref{eq:IHstab}, and the resolution condition \eqref{eq:resolcondH} that
		\begin{align*}
		|\CB(z_T, (1-\eta)z)|&\lesssim 
		\|(1-\eta)z\|_{1,k, A, \Omega\setminus S}|z_T|_{1, A}\\
		&\lesssim 
		(k\|z\|_{\Omega\setminus S}+|z|_{1, A, \Omega\setminus S}+H^{-1}\|z\|_{0, A, \Omega\setminus S})|z_T|_{1, A}\\
		&\lesssim 
		|z|_{1, A, \Omega\setminus S}|z_T|_{1, A}.
		\end{align*}
		
		$I_H(\eta z)$ vanishes on $\Omega\setminus \UN(\supp(1-\eta))$. Hence, we infer with the properties of $\eta$, \eqref{eq:IHapprox}, \eqref{eq:IHstab}, and the resolution condition \eqref{eq:resolcondH} that
		\begin{align*}
		&\!\!\!\!|\CB(z_T, I_H(\eta z))|\\
		&\lesssim  
		\|I_H(\eta z)\|_{1, A, k, \UN(\supp(1-\eta))}|z_T|_{1, A}\\
		&\lesssim 
		\Bigl(|z|_{1, A, \UN^2(\supp(1-\eta))}+k\|\eta z-I_H(\eta z)\|_{\UN(\supp(1-\eta))}+k\|\eta z\|_{\UN(\supp(1-\eta))}\Bigr)|z_T|_{1, A}\\
		&\lesssim 
		\Bigl(|z|_{1, A, \UN^2(\supp(1-\eta))}+kH\varepsilon^{-1}|\eta z|_{1, A, \UN^2(\supp(1-\eta))}+kH\varepsilon^{-1}|z|_{1, A, \UN(\supp(1-\eta))}\Bigr)|z_T|_{1, A}\\
		&\lesssim 
		|z|_{1, A, \UN^2(\supp(1-\eta))}|z_T|_{1, A}.
		\end{align*}
		All in all, summation over all $T$ and Cauchy inequality yield with the finite overlap of patches
		\[|z|_{1, A}^2\lesssim
		C_{\operatorname{ol}, m}^{1/2}|z|_{1, A}\Biggl(\sum_{T\in \CT_H}|z_T|^2_{1, A}\Biggr)^{1/2}.\]
		Combination with \eqref{eq:elementcorrector} finishes the proof.
	\end{proof}
	
	\subsection{Proof of the well-posedness of the method and of the error estimates}
	\label{subsec:proofestimates}
	
	We first prove the discrete inf-sup condition which implies that the solution of the multiscale method is indeed well-defined.
	\begin{proof}[Proof of Theorem \ref{thm:LODwellposed}]
		Let $v_H\in V_H$. From the inf-sup condition for the model problem we infer that there exists $\psi\in H^1(\Omega)$ with $\|\psi\|_{1, A, k}=1$ such that
		\[\Re\CB((\operatorname{id}+\CQ) v_H, \psi)\geq \gamma(\varepsilon, k)\|(\operatorname{id}+\CQ) v_H\|_{1, A, k}.\]
		Define now $\psi_H:=I_H\psi$. We have
		\begin{align*}
		&\!\!\!\!\CB((\operatorname{id}+\CQ_m)v_H, (\operatorname{id}+\CQ^*_m)\psi_H)\\
		&=\CB((\operatorname{id}+\CQ_m)v_H, (\operatorname{id}+\CQ^*_m)\psi_H-(\operatorname{id}+\CQ^*)\psi_H) + \CB((\operatorname{id}+\CQ_m)v_H, (\operatorname{id}+\CQ^*)\psi_H)\\
		&=\CB((\operatorname{id}+\CQ_m)v_H, (\CQ_m^*-\CQ^*)\psi_H)+ \CB((\operatorname{id}+\CQ)v_H, (\operatorname{id}+\CQ^*)\psi_H).
		\end{align*}
		Since $\CQ^*$ is a projection onto $W$, we have $(\operatorname{id}+\CQ^*)I_H\psi=(\operatorname{id}+\CQ^*)\psi$.
		The solution properties of $\CQ^*$ and \eqref{eq:correc} of $\CQ$ imply $\CB((\operatorname{id}+\CQ)v_H, (\operatorname{id}+\CQ^*)\psi_H)=\CB((\operatorname{id}+\CQ)v_H, \psi)$.
		Hence, the continuity of $\CB$ and \eqref{eq:correcerror} imply
		\begin{align*}
		&\!\!\!\!\Re \CB((\operatorname{id}+\CQ_m)v_H, (\operatorname{id}+\CQ^*_m)\psi_H)\\
		&\geq \gamma(\varepsilon, k)\|(\operatorname{id}+\CQ)v_H\|_{1, A, k}-
		\|(\operatorname{id}+\CQ_m)v_H\|_{1, A, k}|(\CQ_m^*-\CQ^*)\psi_H|_{1, A}\\
		& \gtrsim \gamma(\varepsilon, k)\|(\operatorname{id}+\CQ)v_H\|_{1, A, k}-
		C_{\operatorname{ol}, m}^{1/2}\beta^m\|(\operatorname{id}+\CQ_m)v_H\|_{1, A, k}\|\psi_H\|_{1, A, k}.
		\end{align*}
		The stability of $I_H$ yields $\|\psi_H\|_{1, A, k}\lesssim \|\psi\|_{1, A, k}=1$ and
		\[\|v_H\|_{1, A, k}=\|I_H((\operatorname{id}+\CQ) v_H)\|_{1, A, k}\lesssim \|(\operatorname{id}+\CQ) v_H\|_{1, A, k}.\]
		Moreover, we have the following stability for $\CQ_m$:
		\[\|\CQ_m v_H\|_{1, A, k}\lesssim 
		\|v_H\|_{1, A, k}.\]
		This finally gives 
		\begin{align*}
		\Re \CB((\operatorname{id}+\CQ_m)v_H, (\operatorname{id}+\CQ^*_m)\psi_H)\gtrsim (\gamma(\varepsilon, k)-
		C_{\operatorname{ol}, m}^{1/2}\beta^m)\|v_H\|_{1, A, k}\|\psi_H\|_{1, A, k},
		\end{align*}
		which together with the oversampling condition \eqref{eq:oversamplcond} finishes the proof.
	\end{proof}
	
	We can now prove the a priori error estimates which shed alight onto the approximation properties of the multiscale method.
	\begin{proof}[Proof of Theorem \ref{thm:LODapriori}]
		\emph{Proof of \eqref{eq:errorfineenergy}:}
		Denote $e:=u-u_{\operatorname{LOD}, m}=u-(\operatorname{id}+\CQ_m)u_{H, m}$ and define $e_{H, m}:=(\operatorname{id}+\CQ_m)I_H e=(\operatorname{id}+\CQ_m)(I_H u-u_{H, m})$.
		The triangle inequality gives
		\[\|e\|_{1, A, k}\leq \|e-e_{H, m}\|_{1, A, k}+\|e_{H, m}\|_{1, A, k}.\]
		We will show that under the oversampling condition \eqref{eq:oversamplcond} the second term can be bounded by the first term.
		
		Let $z_H\in V_H$ be the solution of the adjoint problem
		\[\CB((\operatorname{id}+\CQ_m)v_H, (\operatorname{id}+\CQ_m^*)z_H)=((\operatorname{id}+\CQ_m)v_H, e_{H, m})_{1, A, k}\qquad \forall v_H\in V_H.\]
		Then we obtain with the orthogonality of $W$ and $(\operatorname{id}+\CQ)V_H$ and the Galerkin orthogonality $\CB(e, (\operatorname{id}+\CQ_m^*)\psi_H)=0$ for all $\psi_H \in V_H$ that
		\begin{align*}
		\|e_{H, m}\|^2_{1, A, k}&=\CB(e_{H, m}, (\operatorname{id}+\CQ_m^*)z_H)\\
		&=\CB(e_{H, m}, (\CQ_m^*-\CQ^*)z_H)+\CB(e_{H, m}, (\operatorname{id}+\CQ^*)z_H)\\
		&=\CB(e_{H, m}, (\CQ_m^*-\CQ^*)z_H)+\CB(e, (\operatorname{id}+\CQ^*)z_H)\\
		&=\CB(e_{H, m}, (\CQ_m^*-\CQ^*)z_H)+\CB(e, (\CQ^*-\CQ_m^*)z_H)\\
		&=\CB(e-e_{H, m}, (\CQ^*-\CQ_m^*)z_H)
		\end{align*}
		The solution $z_H$ of the adjoint problem fulfills the following stability
		\[\|z_H\|_{1, A, k}\leq 
		\gamma^{-1}_{\operatorname{LOD}}\|e_{H, m}\|_{1, A, k}.\]
		Using this stability and the corrector error \eqref{eq:correcerror} results in
		\[\|e_{H, m}\|^2_{1, A, k}\lesssim 
		C_{\operatorname{ol}, m}^{1/2}\gamma^{-1}_{\operatorname{LOD}}\|e-e_{H, m}\|_{1, A, k}\|e_{H, m}\|_{1, A, k}.\]
		Dividing by $\|e_{H, m}\|_{1, A, k}$ on both sides and using the oversampling condition \eqref{eq:oversamplcond} gives the bound $\|e_{H, m}\|_{1, A k}\lesssim \|e-e_{H, m}\|_{1, A, k}$.	
		Second, we estimate the error $e-e_{H, m}=u-(\operatorname{id}+\CQ_m)I_H u\in W$.
		With the ellipticity of $\CB$ over $W$, the $\CB$-orthogonality between $W$ and $(\operatorname{id}+\CQ)V_H$ and Galerkin orthogonality we obtain
		\begin{align*}
		\|e-e_{H, m}\|^2_{1, A, k}&\lesssim \CB(e-e_{H, m}, e-e_{H, m})\\
		&=\CB(u-(\operatorname{id}+\CQ)I_H u, e-e_{H, m}))+\CB((\CQ-\CQ_m)I_H u, e-e_{H, m})\\
		&=\CB(u, e-e_{H, m})+\CB((\CQ-\CQ_m)I_H u, e-e_{H, m})\\*
		&=(f, e-e_{H, m})+\CB((\CQ-\CQ_m)I_H u, e-e_{H, m}).
		\end{align*}
		The last term can be estimated employing \eqref{eq:correcerror} as
		\begin{align*}
		|\CB((\CQ-\CQ_m)I_H u, e-e_{H, m})|\lesssim 
		C_{\operatorname{ol}, m}^{1/2}\beta^m\|u\|_{1, A, k}\|e-e_{H, m}\|_{1, A, k},
		\end{align*}
		where we also used the continuity of $\CB$ and the stability of $I_H$.
		$\|u\|_{1, A, k}$ can be estimated against the data with the inf-sup constant of the model problem.
		For the first term, we can insert $I_H(e-e_{H, m})$ and use the approximation properties of $I_H$ to obtain
		\[|(f, e-e_{H, m})|\lesssim H\|A^{-1/2}f\|\,\|e-e_{H, m}\|_{1, A, k}.\]
		
		\emph{Proof of \eqref{eq:errorfineL2w}:} We again abbreviate $e:=u-u_{\operatorname{LOD}, m}$ and consider the following two adjoint problems:
		(i) Find $z\in H^1(\Omega)$  such that
		\[\CB(\psi, z)=(\psi, e)_{0, A}\qquad \forall \psi\in H^1(\Omega);\]
		(ii) Find $z_{H, m}\in V_H$  such that
		\[\CB((\operatorname{id}+\CQ_m)\psi_H, (\operatorname{id}+\CQ_m^*)z_{H, m})=((\operatorname{id}+\CQ_m)\psi_H, e)_{0, A}\qquad \forall \psi\in H^1(\Omega).\]
		Set $z_{\operatorname{LOD}, m}:=(\operatorname{id}+\CQ_m^*)z_{H, m}$.
		Then we deduce with Galerkin orthogonality
		\[\|e\|_{0, A}^2=|\CB(e, z)|=|\CB(e, z-z_{\operatorname{LOD}, m})|\lesssim
		\|e\|_{1, A, k}\|z-z_{\operatorname{LOD}, m}\|_{1, A, k}. \]
		Note that $z$ and $z_{H, m}$ are solutions to adjoint problems with a volume term on the right-hand side of the form $Ae$.
		Hence, we deduce similar to the above estimate for $\|e\|_{1, A, k}$ that
		\[\|z-z_{\operatorname{LOD}, m}\|_{1, A, k}\lesssim H\|A^{1/2}e\|+
		C_{\operatorname{ol}, m}^{1/2}\beta^m\gamma^{-1}(\varepsilon, k)\|A e\|.\]
		Combination of the foregoing estimates finishes the proof.
		
		\emph{Proof of \eqref{eq:errorcoarseL2w}:} 
		Let $u_{\operatorname{LOD}}$ be the solution to the idealized variant of \eqref{eq:LOD}, i.e., with $\UN^m(T)=\Omega$ for all elements $T$, as introduced at the beginning of Section \ref{subsec:erroranalysis}.
		Because of \eqref{eq:IHapprox} and the definition of the norms in \eqref{eq:L2Anorm} and \eqref{eq:H1Anorm}, we obtain
		\begin{align*}
		\|u-u_{H, m}\|_{0, A}&\leq \|u-I_H u\|_{0, A}+\|I_H u-u_{H, m}\|_{0, A}\\
		&\lesssim H|u-I_Hu|_{1, A}+\|I_H u -u_{H, m}\|_{1, A, k}.
		\end{align*}
		The projection property of $I_H$ and its stability \eqref{eq:IHstab} imply for the first term
		\[|u-I_Hu|_{1, A}\lesssim \inf_{v_H\in V_H}|u-v_H|_{1, A}.\]
		To estimate the second term, we abbreviate $e_H:=I_H u-u_{H,m}\in V_H$ and note that by the definition of $\CQ$ and the stability \eqref{eq:IHstabenergy}, we have
		\[\|e_H\|_{1, A, k}=\|I_H(\operatorname{id}+\CQ)e_H\|_{1, A, k}\lesssim \|(\operatorname{id}+\CQ)e_H\|_{1, A,k}.\]
		Because of the discrete inf-sup condition \eqref{eq:LODinfsup} (which also holds for the idealized version of the LOD), there exist $z_H\in V_H$ with $\|z_H\|_{1, A, k}=1$ such that
		\[\|(\operatorname{id}+\CQ)e_H\|_{1, A,k}\lesssim \gamma_{\operatorname{LOD}}^{-1}|\CB((\operatorname{id}+\CQ)e_H, (\operatorname{id}+\CQ^*)z_H)|.\]
		Proceeding in a similar way as for the estimate of $e_{H,m}$ above, we obtain with the definition of $\CQ$ and Galerkin orthogonality that
		\begin{align*}
		\|e_H\|_{1, A, k}&\lesssim \gamma_{\operatorname{LOD}}^{-1}|\CB((\operatorname{id}+\CQ)e_H, (\operatorname{id}+\CQ^*)z_H)|\\
		&=\gamma_{\operatorname{LOD}}^{-1}|\CB((\operatorname{id}+\CQ)I_H u -(\operatorname{id}+\CQ)u_{H, m}, (\operatorname{id}+\CQ^*)z_H)|\\
		&=\gamma_{\operatorname{LOD}}^{-1}|\CB(u-(\operatorname{id}+\CQ_m)u_{H,m}, (\operatorname{id}+\CQ^*)z_H)|\\
		&=\gamma_{\operatorname{LOD}}^{-1}|\CB(u-u_{\operatorname{LOD}, m}, (\CQ^*-\CQ_m)z_H)|\\
		&\lesssim \gamma_{\operatorname{LOD}}^{-1}C_{\operatorname{ol}, m}^{1/2}\beta^m\|u-u_{\operatorname{LOD}, m}\|_{1, A, k},
		\end{align*}
		where we used \eqref{eq:correcerror} in the last estimate.
		The remaining error $\|u-u_{\operatorname{LOD}, m}\|_{1, A, k}$ has been estimated in \eqref{eq:errorfineenergy}.
	\end{proof}

	\section{Numerical experiments}
	\label{sec:numexp}
	We investigate the method and the a priori estimates in several experiments.
	The convergence history plots display the relative error in the indicated norm versus the (coarse) mesh size $H$.
	We consider the LOD solution $u_{\operatorname{LOD}, m}$ defined in \eqref{eq:uLODm} as well as its FE part $u_{H,m}$, cf. \eqref{eq:LOD}.
	For comparison, we also compute a standard FE solution (denoted as P1FEM in the convergence plots) on $V_H$ as well as the best approximation in $V_H$ with respect to the indicated norm (denoted as P1-best in the convergence plots).
	The solution plots display the real part with a color map truncated to the interval $[-2, 2]$ to visualize the wave behavior outside of the scatterer, which is our main interest.
	We use a uniform simplicial mesh on the unit square $\Omega=(0,1)^2$ in all our experiments and compute all correctors, i.e., we do not exploit the periodic structure as discussed in Section \ref{subsec:implremark}.
	As right-hand side we use 
	\[f(x)=
	\begin{cases}
	10000\:\exp\Bigl(-\frac{1}{1-\bigl(\frac{|x-x_0|}{0.05}\bigr)^2}\Bigr)\qquad \text{if}\quad\frac{|x-x_0|}{0.05}<1,\\
	0\qquad \text{else},
	\end{cases}\]
	with varying center $x_0$.
	We compare results for the $A$-weighted interpolation operator $I_H$ \eqref{eq:IHweighted} as well as its unweighted variant $I_H^1$ \eqref{eq:IHunweighted}.
	
	\subsection{Periodic structure with Mie resonances}
	We consider a periodic scatterer in the domain $(0.25, 0.75)^2$ with
	\[\Omega_\varepsilon=(0.25, 0.75)^2\cap \bigcup_{j\in \gz^2}\varepsilon(j+(0.25, 0.75)^2).\]
	We choose the center for $f$ as $x_0=(0.125, 0.5)$ and the wave number $k=9$. 
	This choice of $k$ is connected to a negative-valued effective wave number $k^2\mu$ in homogenization theory and this unusual behavior is induced by (Mie) resonances in the small inclusions, cf. \cite{OV16hmmhelmholtz}.
	Therefore, we expect that an approximation of the exact solution might be hard in that case and consider it as a good test for the multiscale method.
	Note that this setting fulfills our assumptions, in particular $\Omega_\varepsilon$ is uniformly bounded away from $\partial\Omega$.
A reference solution (still denoted $u$) is computed with standard finite elements with mesh size $h=2^{-9}$ which is also the fine mesh used for the discretization of the corrector problems  as explained in Section \ref{subsec:implremark}.
	The convergence history in $H$ for the errors of $u_{\operatorname{LOD}, m}$ and $u_{H, m}$ with respect to the reference solution $u$ is studied for patch sizes $m=1,2,3$.
	
	\begin{figure}
		\includegraphics[width=0.47\textwidth, trim=42mm 93mm 55mm 98mm, clip=true, keepaspectratio=false]{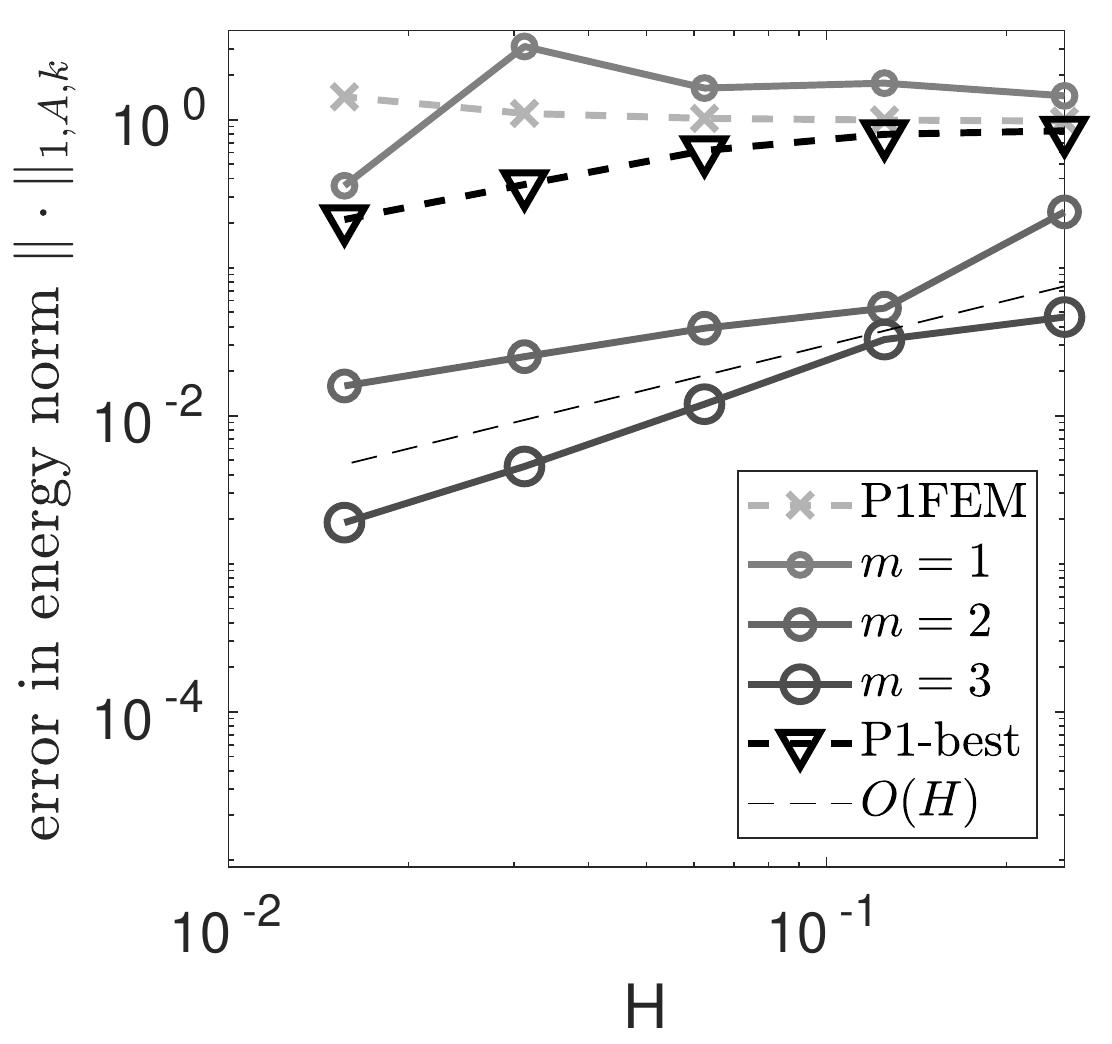}%
		\hspace{5ex}%
		\includegraphics[width=0.47\textwidth, trim=42mm 93mm 55mm 98mm, clip=true, keepaspectratio=false]{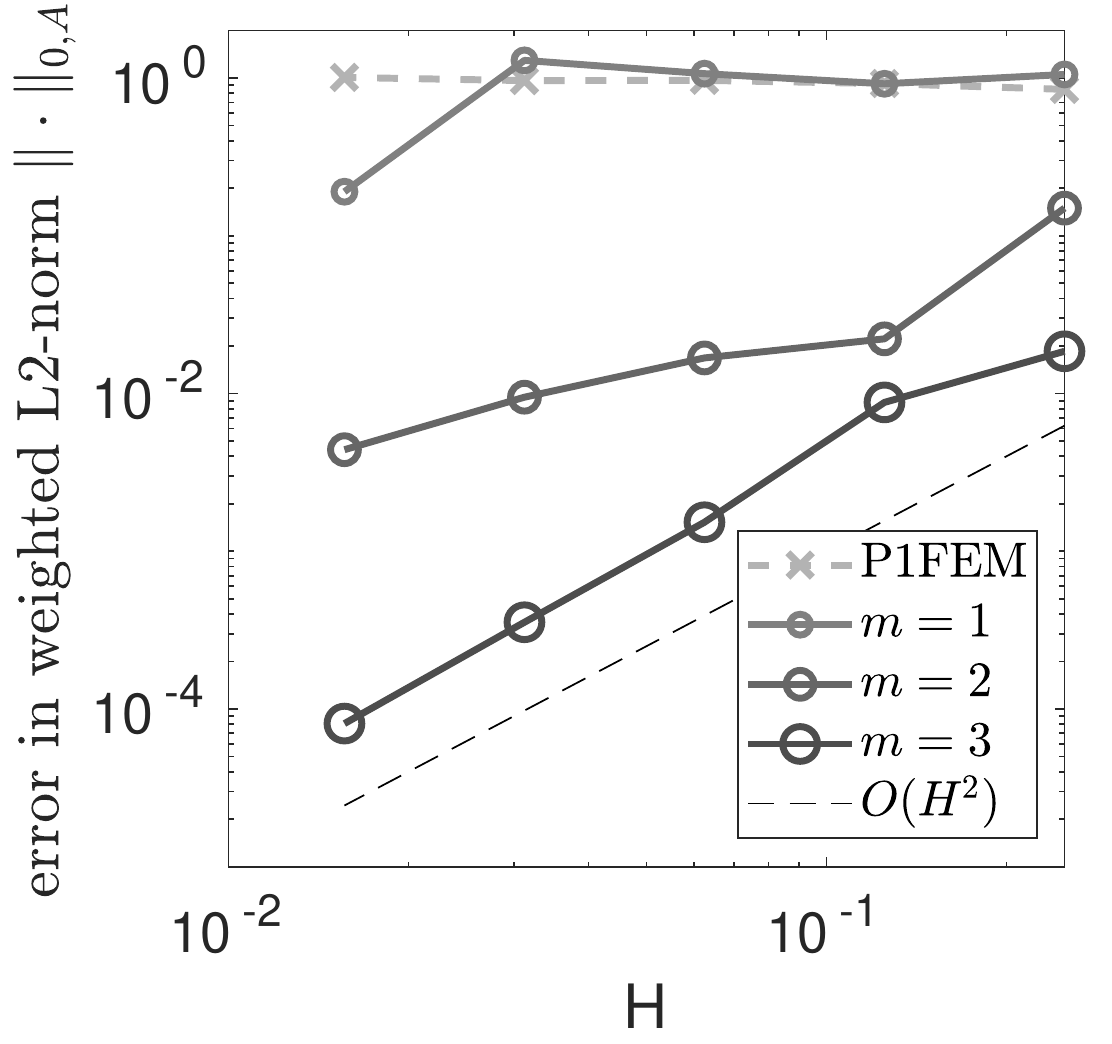}
		\caption{Convergence history of the (relative) error $u-u_{\operatorname{LOD}, m}$ for $\varepsilon=2^{-3}$. In the left figure, P1-best denotes the best approximation in $V_H$ w.r.t. $\|\cdot\|_{1,A,k}$.}	
		\label{fig:upscalederror-N8}
	\end{figure}
	
	For $\varepsilon=2^{-3}$,  
	the results obtained with $I_H$ and $I_H^1$ are very similar so that we restrict the confirmation of the convergence rates to $I_H^1$.
	Figure \ref{fig:upscalederror-N8} shows the convergence histories for $u-u_{\operatorname{LOD}, m}$ and we verify the linear rate in the energy norm (and the standard $L^2$ norm) as well as the quadratic rate in the weighted $L^2$ norm.
	A patch size of $m=2,3$ seems to be sufficient.
	We observe a clear superiority of the multiscale method over a standard finite element method on the coarse mesh, which fails to yield good approximations for these values of $H$.
	The standard FEM even deviates significantly from the FE best approximations with respect to the energy norm and weighted $L^2$-norm (computed by projecting the reference solution onto $V_H$). The FE best approximations are outperformed by the multiscale method which takes fine-scale features into account.
	
	\begin{figure}
		\includegraphics[width=0.47\textwidth, trim=42mm 93mm 55mm 98mm, clip=true, keepaspectratio=false]{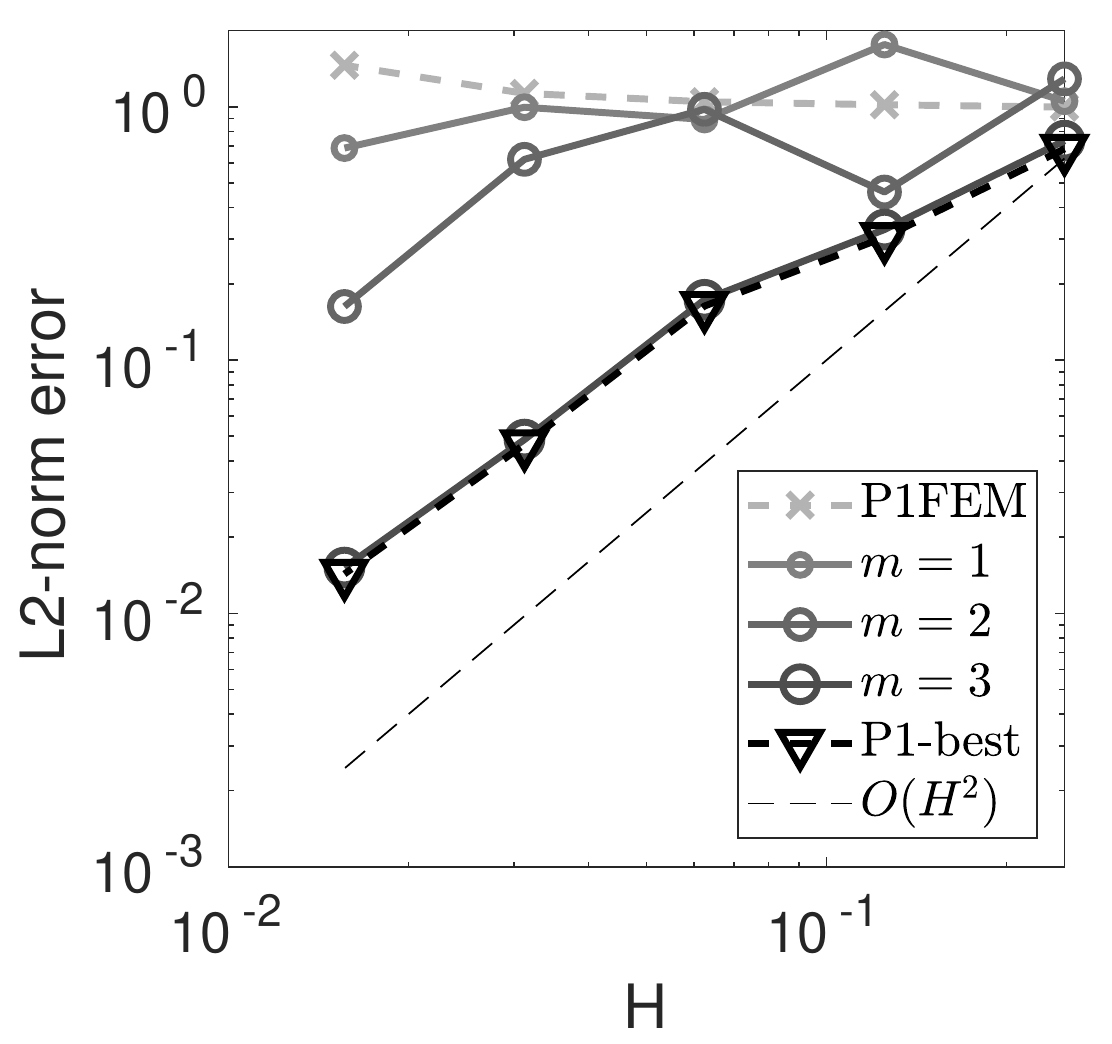}%
		\hspace{5ex}%
		\includegraphics[width=0.47\textwidth, trim=42mm 93mm 55mm 98mm, clip=true, keepaspectratio=false]{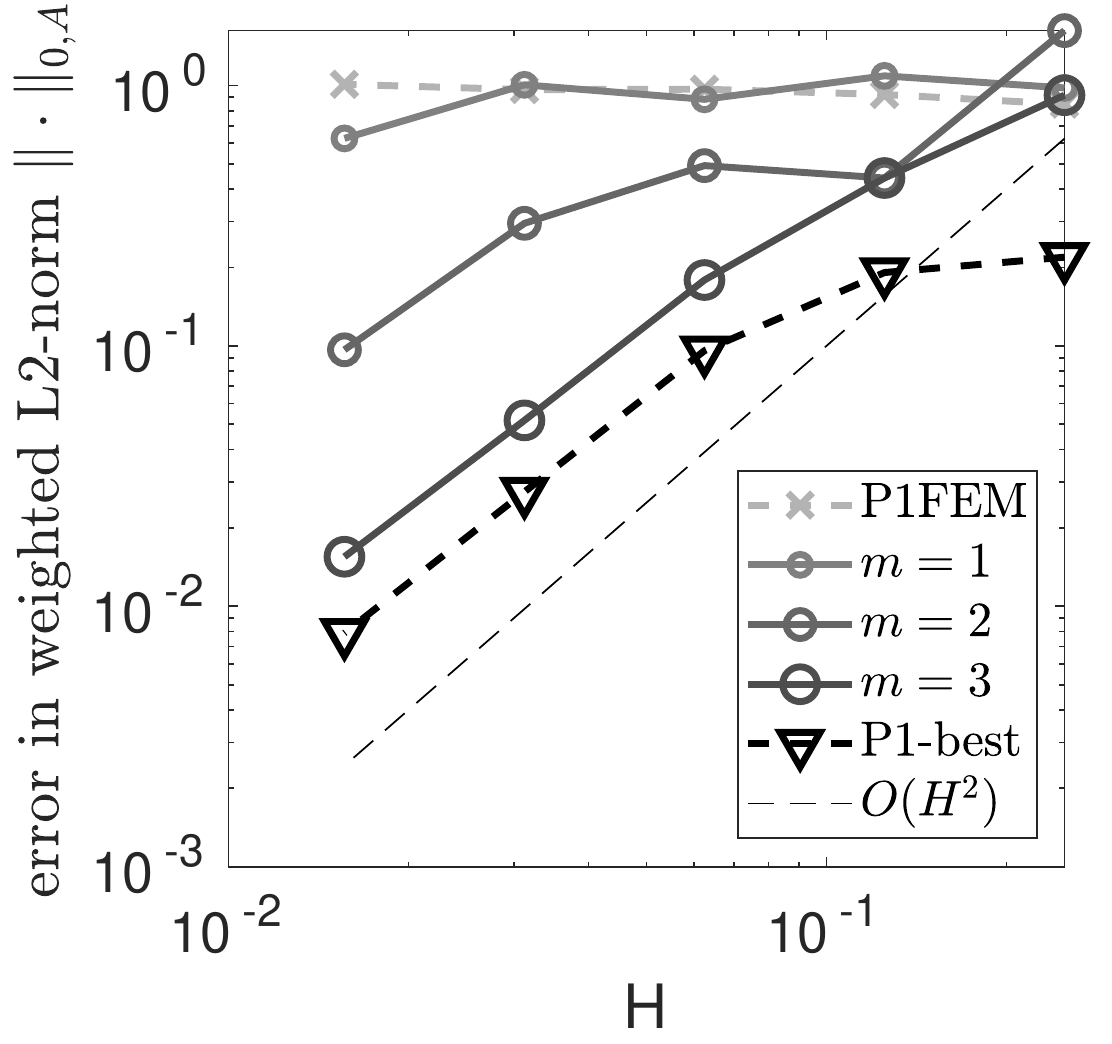}%
		\caption{Convergence history of the (relative) error $u-u_{H, m}$ for $\varepsilon=2^{-3}$. Note that P1-best stands for the best approximation in $V_H$ w.r.t. the $L^2$-norm (left) and the weighted $L^2$-norm $\|\cdot\|_{0,A}$ (right), respectively.}	
		\label{fig:macroerror-N8}
	\end{figure}
	
	In this context it is very interesting to analyze the error $u-u_{H, m}$ of the FE part $u_{H, m}$ of the multiscale method in Figure \ref{fig:macroerror-N8}.
	In the standard as well as the weighted $L^2$ norm, we observe a convergence which closely follows the behavior of the FE best approximations in the space $V_H$ with respect to the corresponding norms. 
	This clearly underlines that there exists a macroscopic approximation to the exact solution which is good in an $L^2$ sense.
	While the multiscale method is able to (almost) find this best approximation, the FEM (using the same approximation space) fails completely.
	Note that we observe a numerical convergence rate of about $2$ for the error $u-u_{H, m}$ in this example.
	This is explained by higher regularity of the exact solution, which results in an order $H$ for the best-approximation error $\inf_{v_H\in V_H}|u-v_H|_{1, A,k}$ as discussed after Theorem \ref{thm:LODapriori}.
	
	\begin{figure}
		\includegraphics[width=0.47\textwidth, trim=42mm 93mm 55mm 98mm, clip=true, keepaspectratio=false]{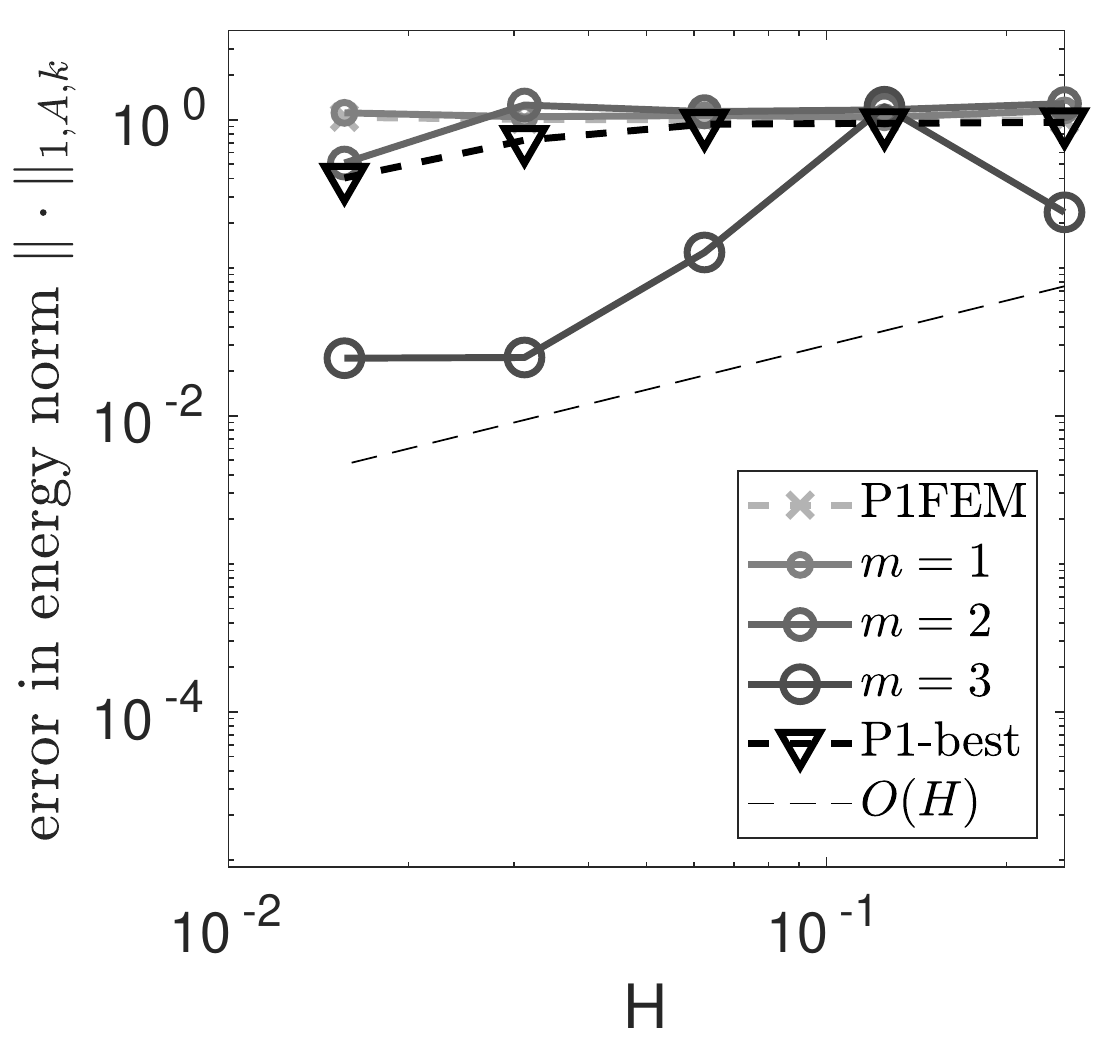}%
		\hspace{5ex}%
		\includegraphics[width=0.47\textwidth, trim=42mm 93mm 55mm 98mm, clip=true, keepaspectratio=false]{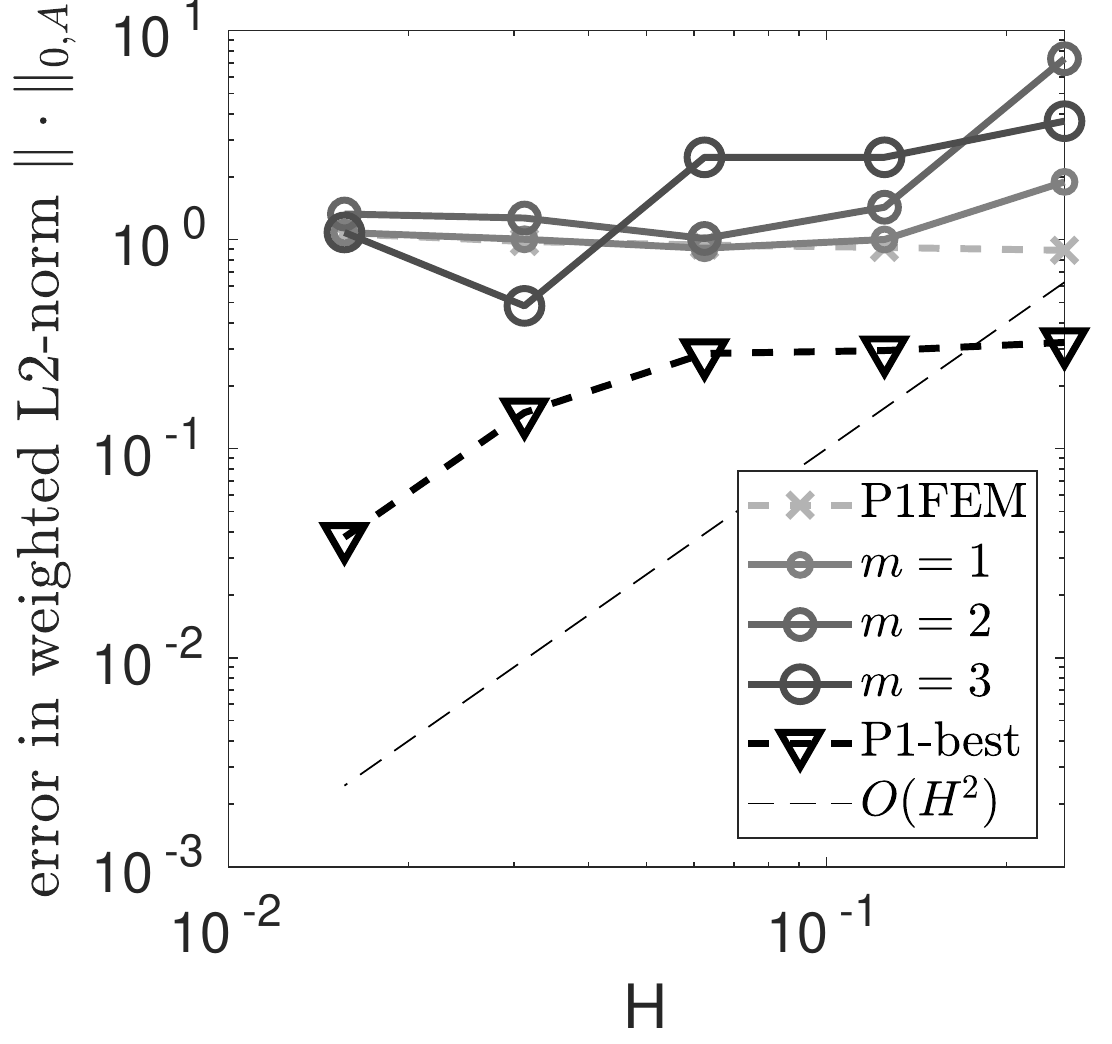}\\
		\includegraphics[width=0.47\textwidth, trim=42mm 93mm 55mm 98mm, clip=true, keepaspectratio=false]{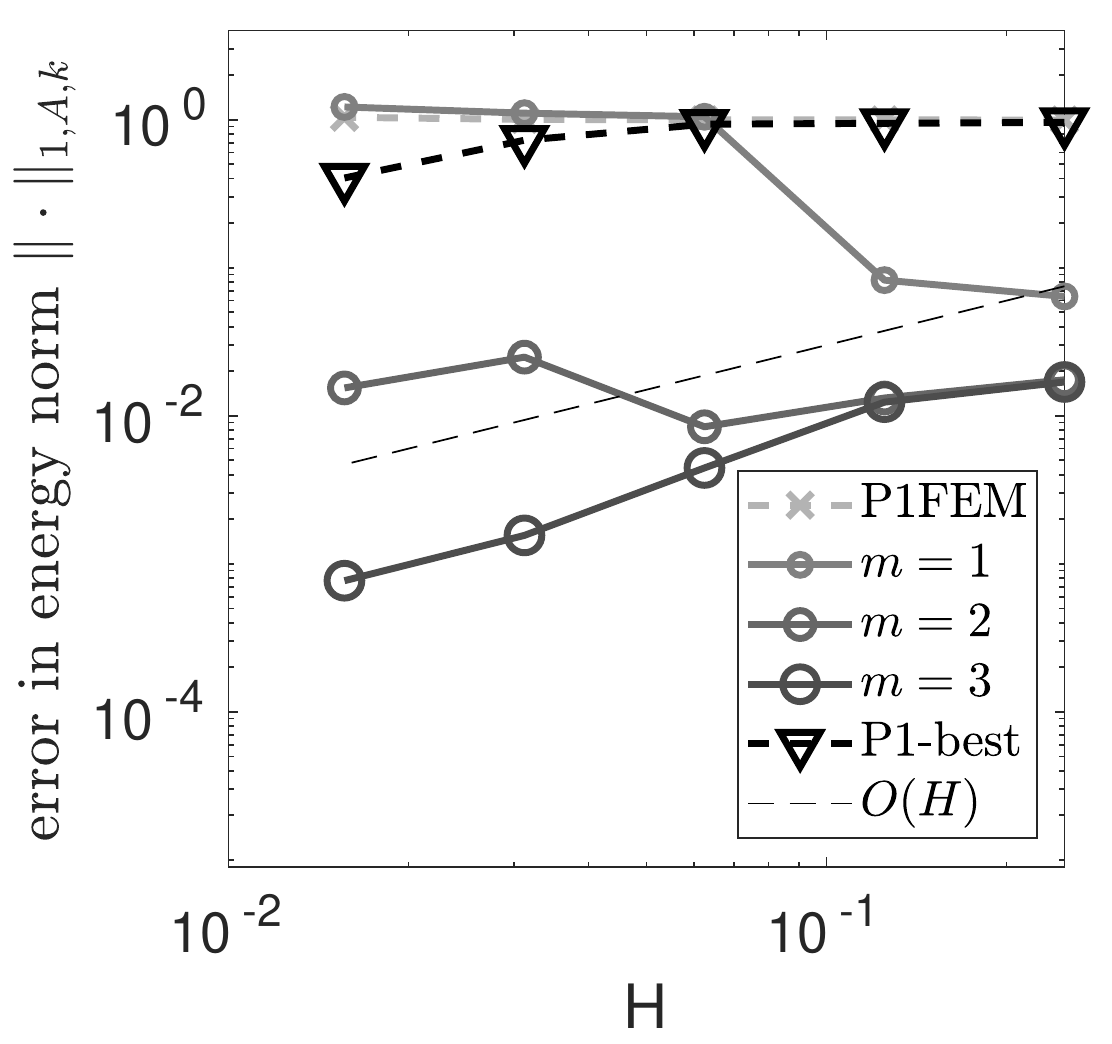}%
		\hspace{5ex}%
		\includegraphics[width=0.47\textwidth, trim=42mm 93mm 55mm 98mm, clip=true, keepaspectratio=false]{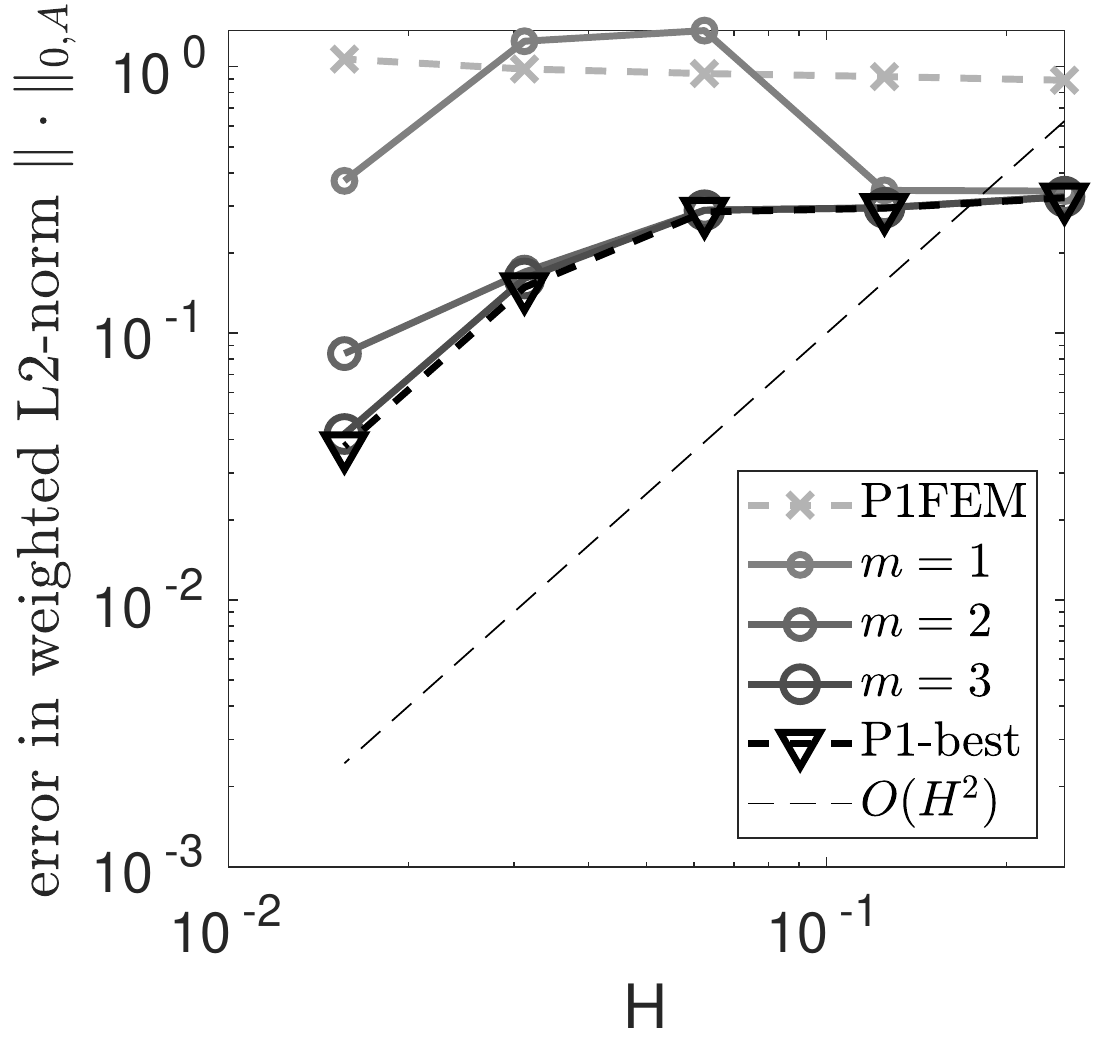}
		\caption{Convergence history of the (relative) errors for unweighted (top) and weighted (bottom) interpolation operator, both for $\varepsilon=2^{-4}$. Note that P1-best stands for the best approximation in $V_H$ w.r.t. $\|\cdot\|_{1,A,k}$ (left) and $\|\cdot\|_{0,A}$ (right), respectively.}	
		\label{fig:error-N16}
	\end{figure}

	For $\varepsilon=2^{-4}$, we compare $\|u-u_{\operatorname{LOD}, m}\|_{1, A, k}$ and $\|u-u_{H, m}\|_{0, A}$ for the unweighted interpolation operator $I_H^1$ \eqref{eq:IHunweighted} and its $A$-weighted variant $I_H$ \eqref{eq:IHweighted}.
	As Figure \ref{fig:error-N16} shows, only the weighted interpolant succeeds to yield good approximations (for patch size $m=3$ at least).
	In the $\|\cdot\|_{0, A}$-norm, the error even follows the best approximation error for the weighted operator (bottom right), which is the best we can hope for.
	We moreover observe a larger pre-asymptotic range for $\varepsilon=2^{-4}$ than before for $\varepsilon=2^{-3}$ which indicates that the resolution condition between $k$, $\varepsilon$ and $H$ is sharp in this setting.
	
	\subsection{Periodic structure with local defects}
	We now place the periodic structure described in the previous section into a slab-like scatterer, i.e., we set
	\[\Omega_\varepsilon=\bigl((0.25, 0.75)\times (0,1))\bigr)\cap \bigcup_{j\in \gz^2}\varepsilon(j+(0.25, 0.75)^2).\]
	We fix the periodicity to $\varepsilon=2^{-3}$, see Figure \ref{fig:lensing} for a representation of $A$.
	Since the contrast is rather moderate, the results using $I_H$ and $I_H^1$ are very similar and we only depict computations with $I_H^1$.
	We choose the same function $f$ as data term, this time at $x_0=(0.25, 0.5)$, i.e., closer to the periodic structure.
	Moreover, we change the wave number and study $k=28$.
	The reference solution is computed on a fine mesh with $h=2^{-9}$.
	We emphasize that this a regime where the wave comes into (geometric) resonance with the periodic structure and where homogenization arguments can no longer be applied, see also the discussion in \cite{DS17wavenum}.

	\begin{figure}
		\includegraphics[width=0.47\textwidth,trim=40mm 95mm 45mm 98mm, clip=true, keepaspectratio=false]{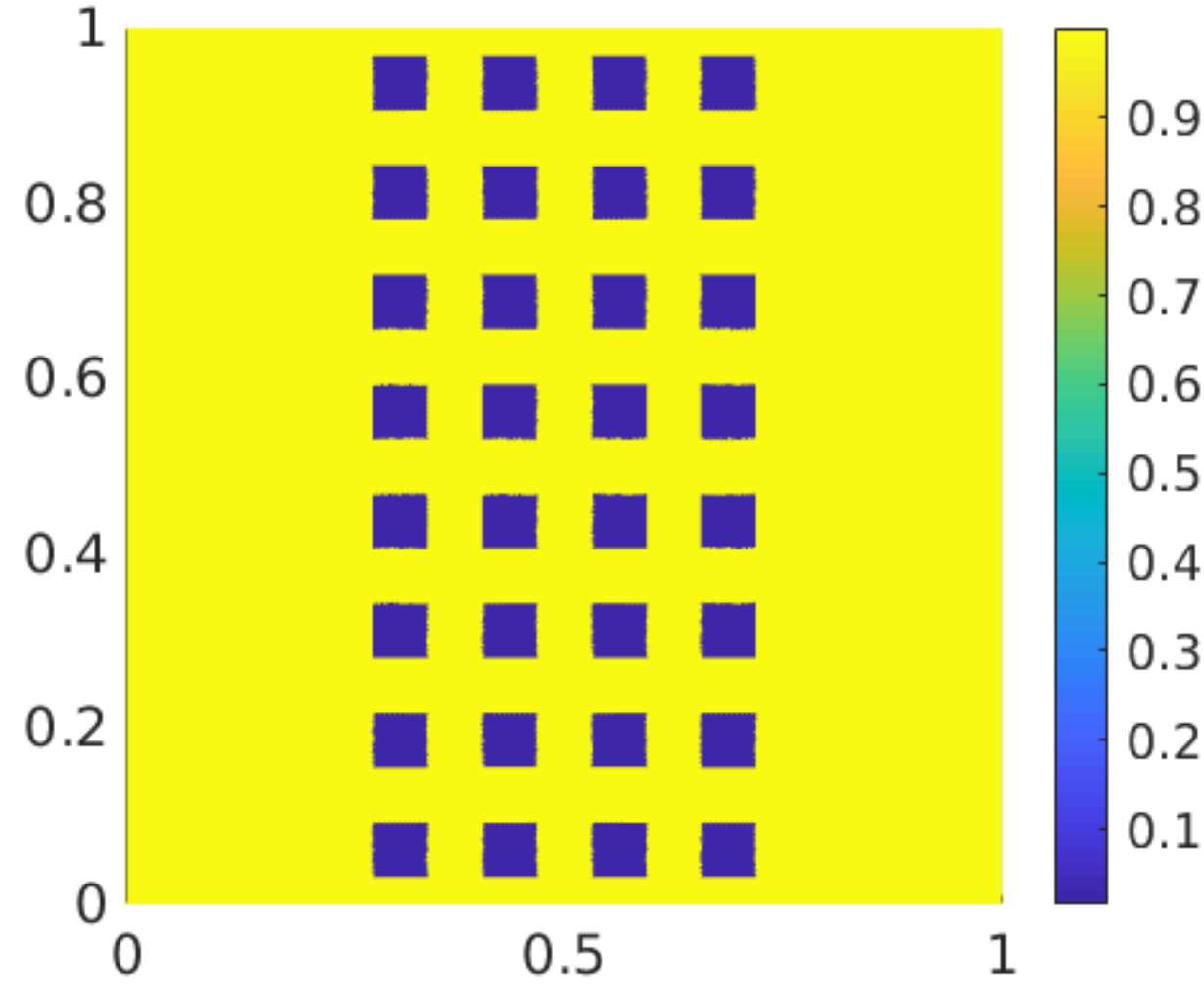}%
		\hspace{5ex}%
		\includegraphics[width=0.47\textwidth,trim=40mm 95mm 38mm 95mm, clip=true, keepaspectratio=false]{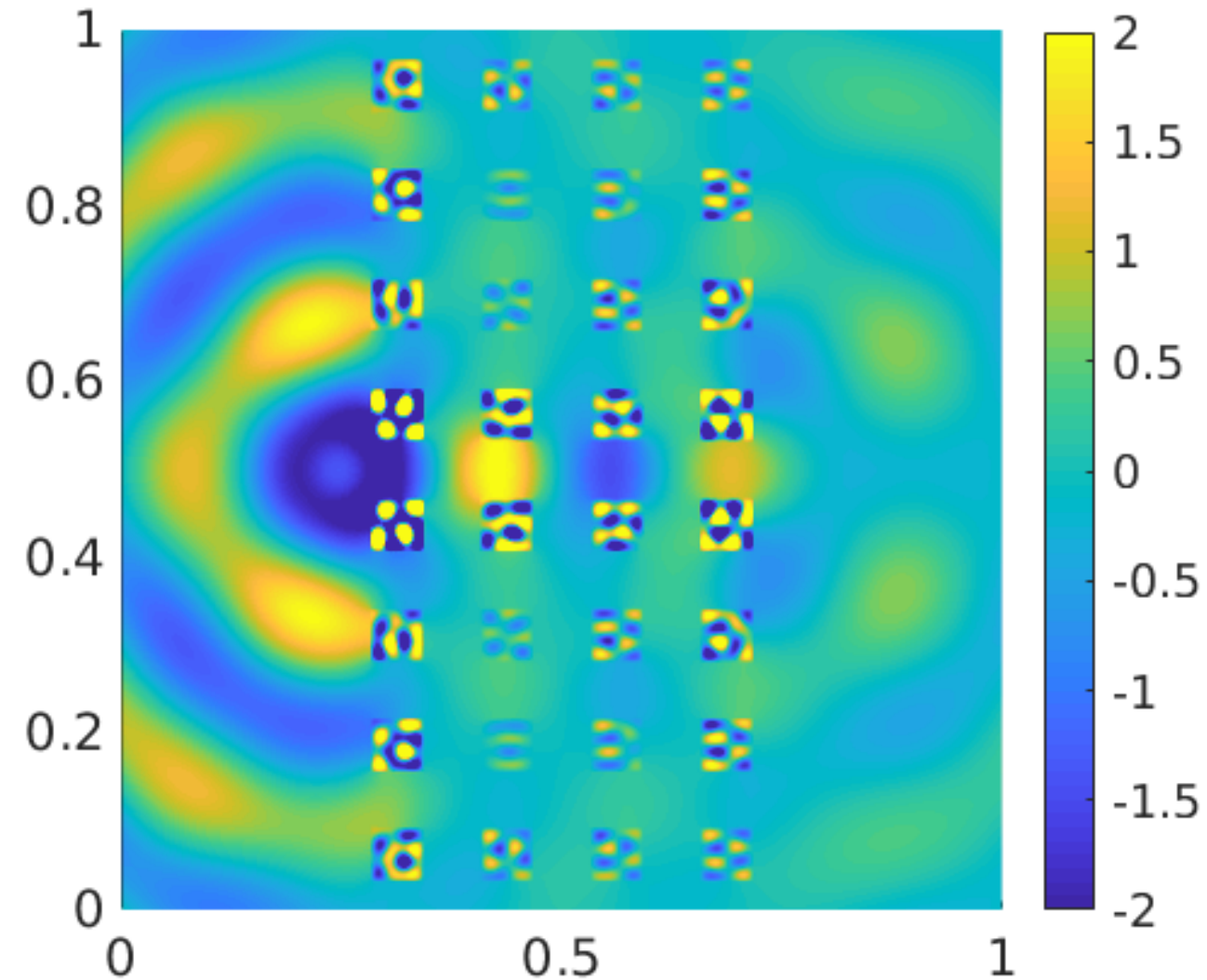}
		\caption{Lensing by a periodic structure, left: coefficient, right: (upscaled) LOD approximation $u_{\operatorname{LOD}, m}$ for $H=2^{-6}$, $m=2$.}	
		\label{fig:lensing}
	\end{figure}
	\begin{figure}
		\includegraphics[width=0.47\textwidth,trim=40mm 95mm 45mm 98mm, clip=true, keepaspectratio=false]{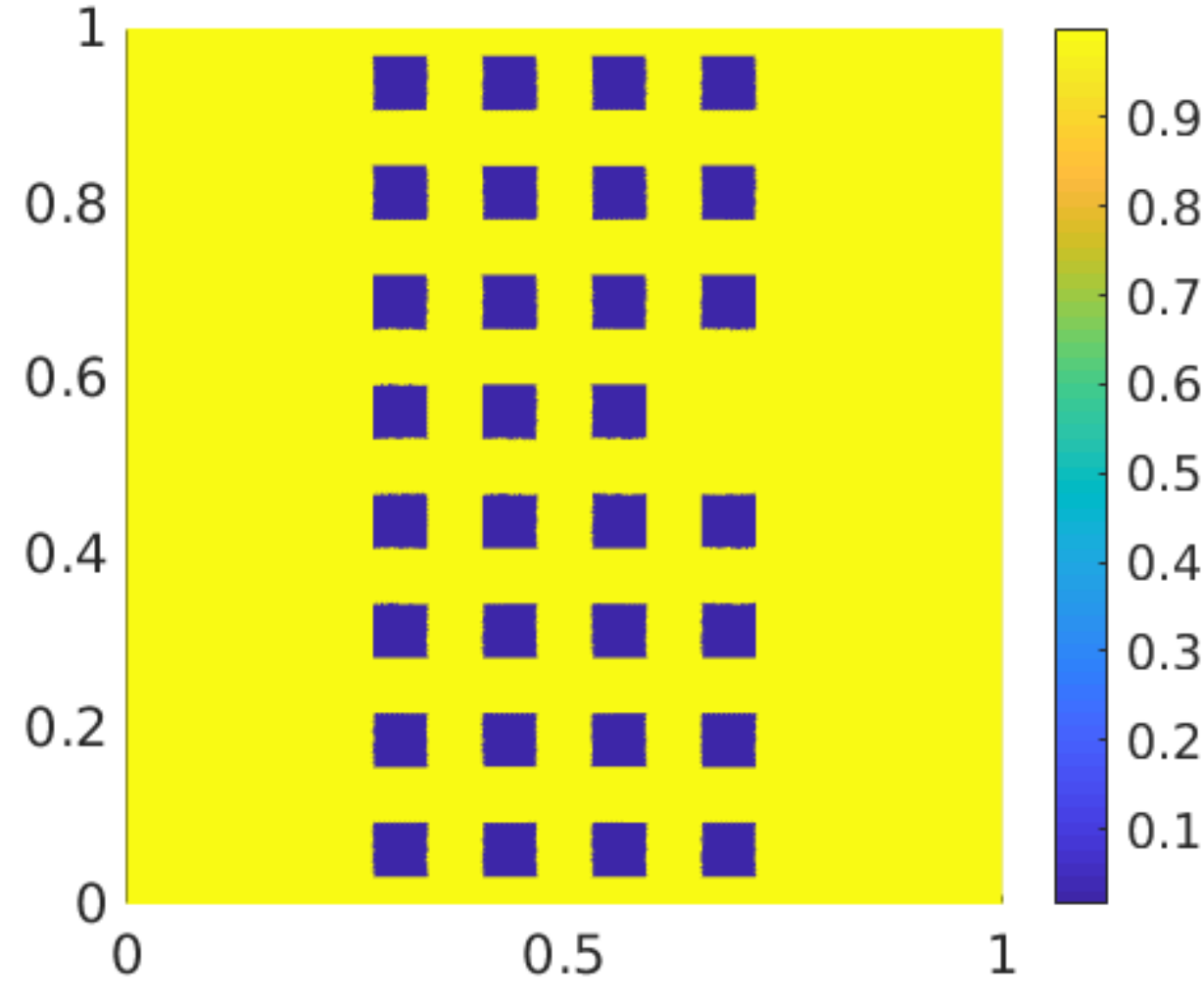}%
		\hspace{5ex}%
		\includegraphics[width=0.47\textwidth,trim=40mm 95mm 42mm 98mm, clip=true, keepaspectratio=false]{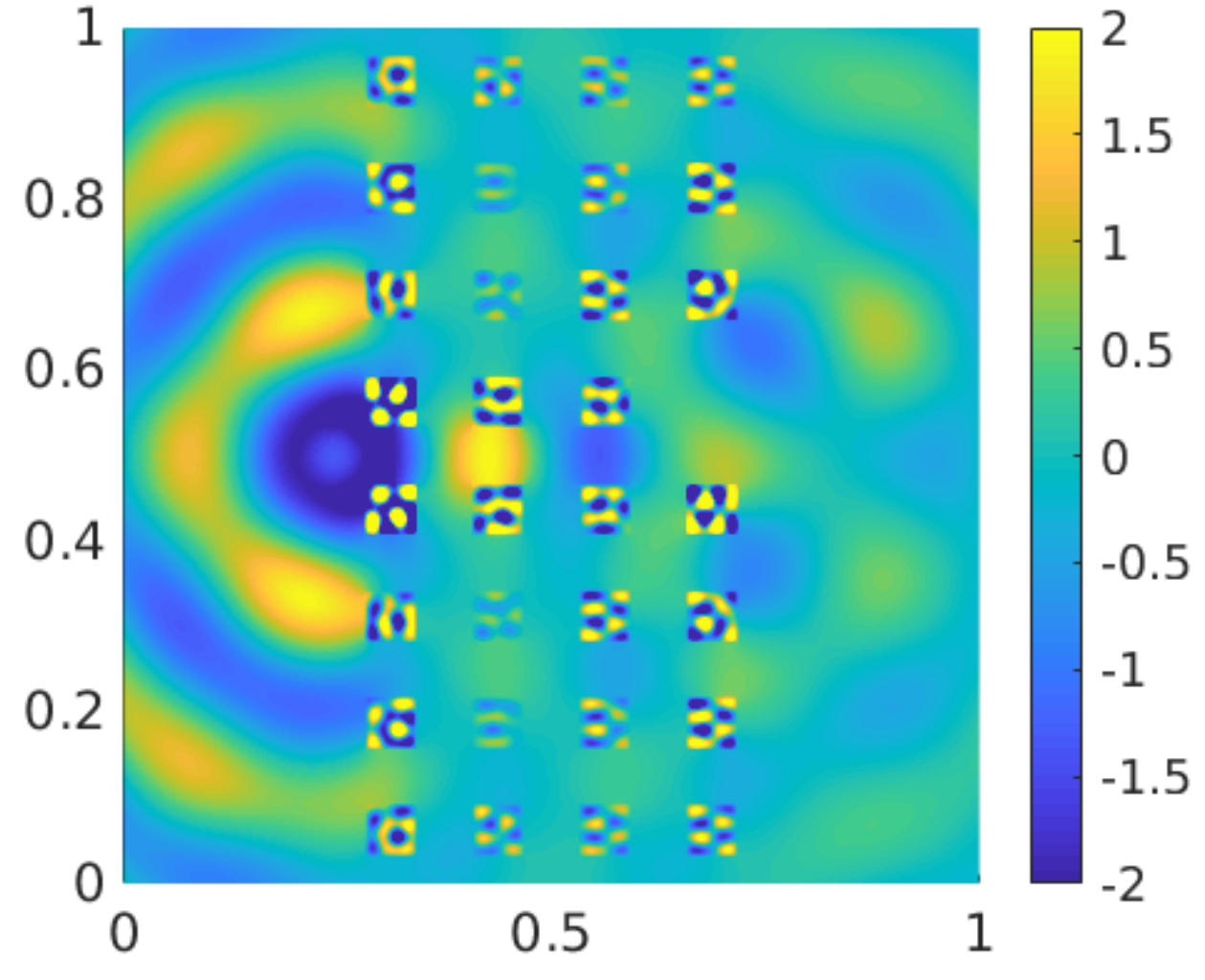}
		\caption{Periodic structure with a point defect, left: coefficient, right: (upscaled) LOD approximation $u_{\operatorname{LOD}, m}$ for $H=2^{-6}$, $m=2$.}	
		\label{fig:pointdef}
	\end{figure}

	In this section, we consider the periodic structure and two perturbations (a point and a line defect, see below), which yield physically interesting effects.
	As we mainly focus on the phenomena here, we do not study convergence histories, but show how (and for which meshes) the upscaled approximation $u_{\operatorname{LOD}, m}$ and its macroscopic part $u_{H, m}$ are able to faithfully represent the (qualitative) behavior of the exact solution.
	Compared to the wave number $k=28$, a rather coarse mesh of $H=2^{-6}$ and a moderate patch size of $m=2$ is sufficient to produce a qualitatively good approximation with our multiscale method in all experiments.
	Already in the previous subsection, we observed that the periodic setting with $\varepsilon=2^{-3}$ is not such a great challenge from the high contrast point of view so that a rough coupling of $kH\lesssim 1$ seems to be sufficient.
	We also observe that we need $H=2^{-7}$ and $m=2$ to have a faithful approximation by the macroscopic part $u_{H, m}$.
	All in all, the experiments of this section show that the multiscale method is able to capture physically relevant settings and is applicable also in the regime where no scale separation between wave length and fine-scale geometry exists.
	
	For the perfectly periodic setting, we observe a sort of lensing effect: The wave pattern behind the scatterer looks (qualitatively)  as if a source were located also behind the scatterer, see Figure \ref{fig:lensing}.
	This effect is closely related to negative refraction of the material, see \cite{DS17wavenum,CJJP02negrefraction}.
	Next, we introduce a point defect into the periodic structure: We eliminate one inclusion and leave the setting otherwise unchanged, see Figure \ref{fig:pointdef}.
	This local defect, however, although rather far away from the point source has a tremendous effect on the behavior of the wave. The wave pattern behind the scatterer is different from the periodic case (for instance, it is no longer symmetric around the axis $y=0.5$), see Figure \ref{fig:pointdef}. Hence, one may conclude that the lensing effect is destroyed.
	From the point defect we now move to a line defect: We widen the area where $A=1$ in the middle of the scatterer (around the line $y=0.5$) from $0.5\varepsilon$ to $\varepsilon$, see Figure \ref{fig:linedef}.
	The new (thin) channel now acts as a wave guide (cf. \cite{AS04waveguide}), i.e., the wave issued form the source is mainly traveling through the new channel, see Figure \ref{fig:linedef}.
	\begin{figure}
	\includegraphics[width=0.47\textwidth,trim=40mm 95mm 45mm 98mm, clip=true, keepaspectratio=false]{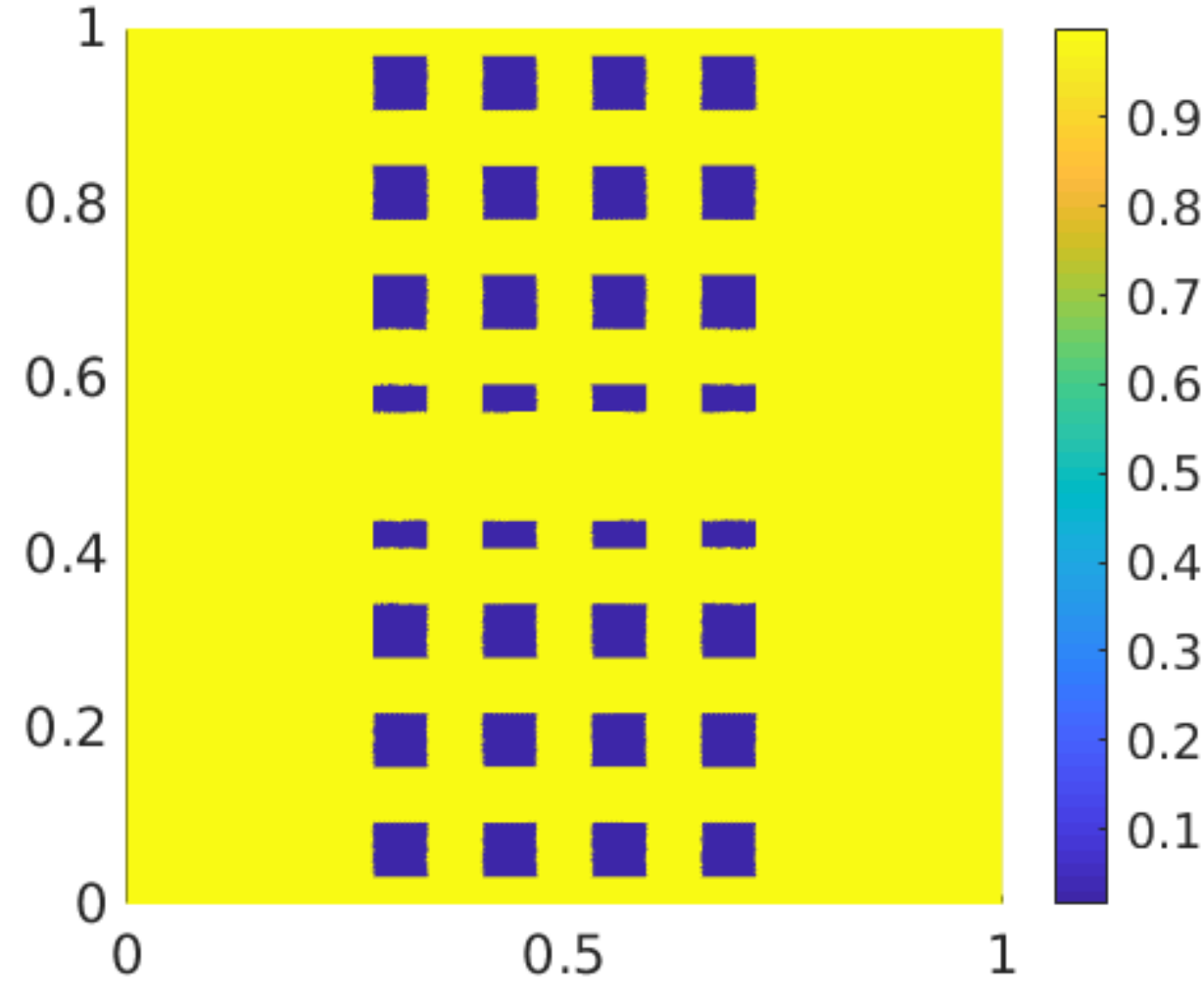}%
	\hspace{5ex}%
	\includegraphics[width=0.47\textwidth,trim=40mm 95mm 38mm 95mm, clip=true, keepaspectratio=false]{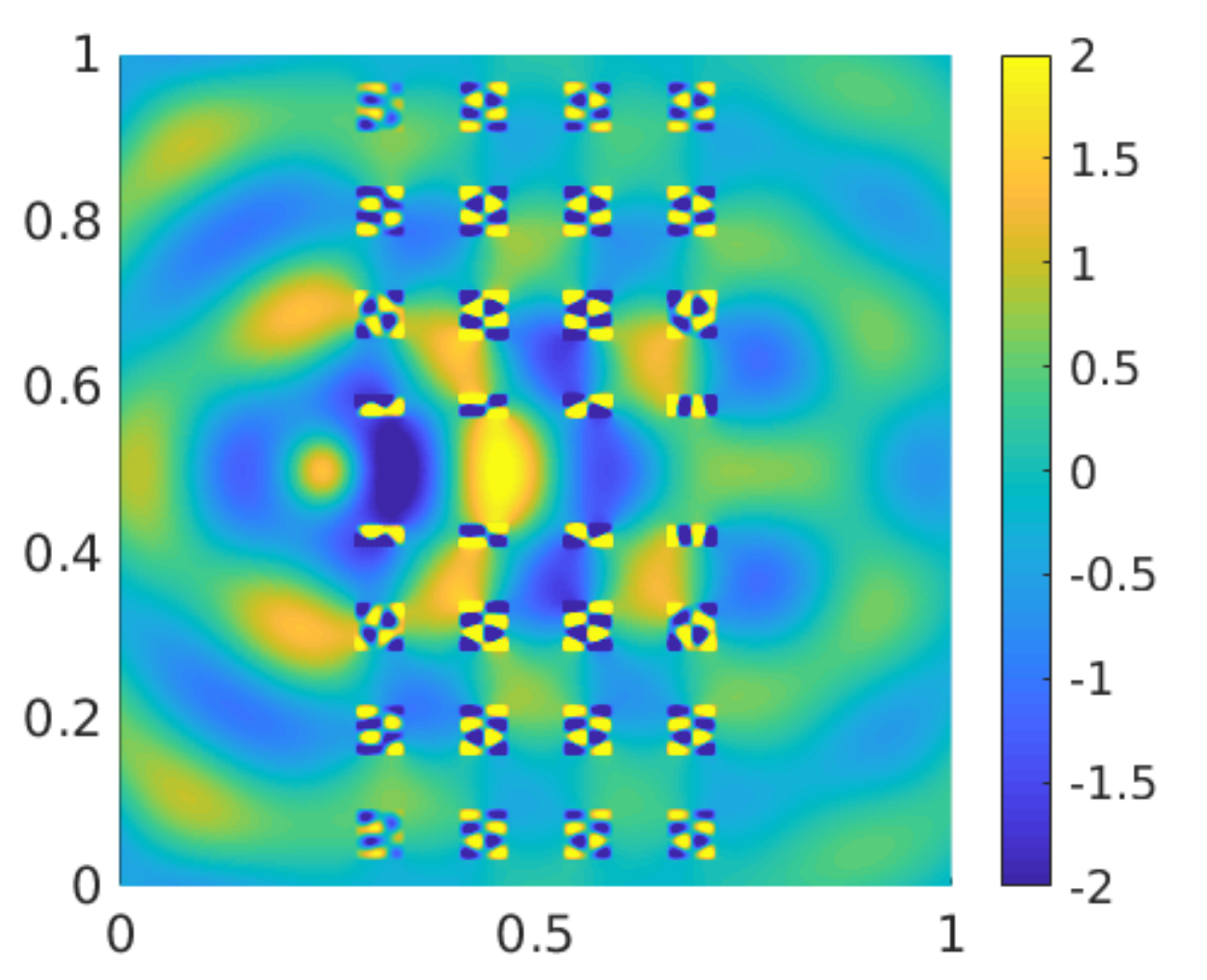}
	\caption{Wave guide induced by a line defect, left: coefficient, right: (upscaled) LOD approximation $u_{\operatorname{LOD}, m}$ for $H=2^{-6}$, $m=2$.}	
	\label{fig:linedef}
	\end{figure}

	\section*{Conclusion}
	We analyzed the Localized Orthogonal Decomposition for high-contrast high-frequency Helmholtz problems beyond periodicity and scale separation.
	It yields faithful approximations of the exact solution if the (coarse) mesh size is of the order of the effective wave length.
	The method relies on the solution of local finescale problems where the size of the local patches only grows logarithmically with the effective wave number.
	By effective wave number or wave length we denote the actual wave number or wave length divided by square root of the lowest value of the diffusion coefficient.
	We proved optimal convergence rates in the energy, the $L^2$, and a weighted $L^2$ norm and linked the results to expectations from the periodic case.
	The numerical experiments confirmed the theoretical convergence rates and showed the applicability of the method to the Bloch wave regime and periodic structures perturbed by local (point or line) defects.
	We thereby simulated interesting physical effects such as lensing and wave guides.
	Since the method is also applicable for totally unstructured or disordered coefficients, unusual and interesting phenomena in random media can be studied in future work.
	
	\section*{Acknowledgments}
	We thank S.~Sauter for his comment on the unique continuation principle for the Helmholtz equation and E.~A.~Spence for the discussion on stability estimates. We are grateful for the remarks of the anonymous reviewers which helped to improve the paper.

\end{document}